\numberwithin{equation}{section}
\newtheorem{theorem}{Theorem}[section]
\newtheorem{definition}[theorem]{Definition}
\newtheorem{lemma}[theorem]{Lemma}
\newtheorem{corollary}[theorem]{Corollary}
\newtheorem{notation}[theorem]{Notation}
\newtheorem{remark}[theorem]{Remark}
\theoremstyle{definition}
\DeclareMathOperator{\der}{Der}
\DeclareMathOperator{\inn}{inn}
\DeclareMathOperator{\ch}{char}
\newcommand{\FF }{\mathbb F}
\begin{document}


\title{Derivations of Non-Commutative Group Algebras}

\author{Praveen Manju and Rajendra Kumar Sharma}
\date{}
\maketitle

\begin{center}
\noindent{\small Department of Mathematics, \\Indian Institute of Technology Delhi, \\ Hauz Khas, New Delhi-110016, India$^{1}$}
\end{center}

\footnotetext[1]{{\em E-mail addresses:} \url{praveenmanjuiitd@gmail.com}(Corresponding Author: Praveen Manju), \url{rksharmaiitd@gmail.com}(Rajendra Kumar Sharma).}

\medskip

\begin{abstract}
In this article, we study the derivations of group algebras of some important groups, namely, dihedral ($D_{2n}$), Dicyclic ($T_{4n}$) and Semi-dihedral ($SD_{8n}$). First, we explicitly classify all inner derivations of a group algebra $\FF G$ of a finite group $G$ over an arbitrary field $\FF $. Then we classify all $\FF $-derivations of the group algebras $\FF D_{2n}$, $\FF T_{4n}$ and $\FF (SD_{8n})$ when $\FF $ is a field of characteristic $0$ or an odd rational prime $p$ by giving the dimension and an explicit basis of these derivation algebras. We explicitly describe all inner derivations of these group algebras over an arbitrary field. Finally, we classify all derivations of the above group algebras when $\FF $ is an algebraic extension of a prime field. 
\end{abstract}

\textbf{Keywords:} Derivation, Inner derivation, Outer derivation, $\FF $-derivation, Group ring, Group algebra, Dihedral, Dicyclic, Semi-dihedral

\textbf{Mathematics Subject Classification (2020):} 16S34, 16W25, 20C05

\section{Introduction}
The notion of derivations, introduced from analytic theory is old and plays a significant role in the research of structure and property in algebraic systems. In this article, we consider the pure algebraic structure, namely, the group ring and its derivations which have numerous applications. For a history of group rings, we refer the reader to {\cite[Chapter 3]{CPM2002}} and for a history of derivations, we refer the reader to the survey articles  \cite{Haetinger2011}, \cite{MohammadAshraf2006}. Derivations have been highly studied in prime and semiprime rings and have been principally used in solving functional equations \cite{Brear1992}. Derivations of group rings have various applications in coding theory \cite{Creedon2019}, \cite{Boucher2014}. The explicit description of derivations is useful in the construction of codes (see \cite{Creedon2019}).

Group algebras have a rich structure and have been studied by many mathematicians. Describing the derivation algebra consisting of the derivations of group algebra is a well-known problem. The derivation problem for group rings asks if all the derivations in a group ring are inner or if the space of outer derivations is trivial. We refer the reader to \cite{AleksandrAlekseev2020}, \cite{A.A.Arutyunov2020}, \cite{Arutyunov2020b}, \cite{Arutyunov2020} for the history and importance of the derivation problem.

There has not been much research on derivations of group rings (defined purely algebraically). The study of derivations of group rings began with the paper \cite{Smith1978} where the author studies derivations of group rings of a finitely-generated, torsion-free, nilpotent group over a field. For instance, it is shown that such group rings always contain an outer derivation. In \cite{Spiegel1994}, it is proved that every derivation of an integral group ring of a finite group is inner. In \cite{MiguelFerrero1995}, the authors proved that if $G$ is a torsion group whose center $Z(G)$ has finite index in $G$ and $R$ is a semiprime ring such that $\ch(R)$ is either $0$ or does not divide the order of any element of $G$, then every $R$-derivation of the group ring $RG$ is inner. A. A. Arutyunov in his several papers (\cite{AleksandrAlekseev2020}, \cite{OrestD.Artemovych2020}, \cite{Arutyunov2021}, \cite{A.A.Arutyunov2020}, \cite{Arutyunov2023}, \cite{Arutyunov2020a}, \cite{Arutyunov2020}) studies derivations using topology and characters. In \cite{Creedon2019}, the authors study the derivations of group rings over a commutative unital ring $R$ in terms of the generators and relators of the group. They give a necessary and sufficient condition under which a map from the generating set of the group $G$ to the group ring $RG$ can be extended to a derivation of $RG$. As an application of this characterization, the authors also classify the derivations of commutative group algebras over a field of prime characteristic $p$ and that of dihedral group algebras over a field of characteristic $2$. In this article, we classify the derivation algebras of the groups algebras of dihedral, dicyclic, and semi-dihedral groups over a field of characteristic $0$ or an odd prime $p$. Thus, the derivation algebras of some important non-commutative group algebras are classified and the derivation problem for these group algebras is solved over fields of characteristic $0$ or odd prime $p$. The structure of the manuscript is as follows.

The article is divided into five sections. In Section \ref{section 2}, we state some preliminaries. In Subsection \ref{subsection 2.1}, we state some basic facts and definitions, notations, and remarks. In Subsection \ref{subsection 2.2}, we include some important results that are used in the following sections. In Section \ref{section 3}, we prove our main  \th\ref{theorem 3.3}, in which we classify all $\FF $-derivations of the dihedral group algebra $\FF D_{2n}$, where $D_{2n} = \langle a, b \mid a^{n} = 1, b^{2} = 1, (ab)^{2} = 1 \rangle$ ($n \geq 3$) and $\FF $ is a field of characteristic $0$ or an odd rational prime $p$. As a consequence, we determine the derivation algebra $\der(\FF G)$ of $\FF D_{2n}$ when $\FF $ is an algebraic extension of a prime field, and then further classify the derivations in $\der(\FF G)$ as inner or outer thus solving the derivation problem for $\FF D_{2n}$. It is established that when $\FF $ is an algebraic extension of a prime field with characteristic $0$ or an odd rational prime $p$, then all derivations of $\FF D_{2n}$ are inner provided $\ch(\FF )$ is either $0$ or an odd prime $p$ with $\gcd(n,p)=1$ and $\FF D_{2n}$ has non-zero outer derivations only when $\ch(\FF ) = p$ with $\gcd(n,p) \neq 1$. In Section \ref{section 4}, we study the derivations of dicyclic group algebras $\FF T_{4n}$, where $T_{4n} = \langle a, b \mid a^{2n}=1, a^{n}=b^{2}, b^{-1}ab = a^{-1} \rangle$ and obtain results analogous to that of Section \ref{section 3}. In Section \ref{section 5}, we study the derivations of semi-dihedral group algebras $\FF (SD_{8n})$, where $SD_{8n} = \langle a, b \mid a^{4n} = b^{2} = 1, bab = a^{2n-1}\rangle$ and obtain results analogous to that of Sections \ref{section 3} and \ref{section 4}.

\section{Preliminaries}\label{section 2}
\subsection{Basic Facts}\label{subsection 2.1}
Below we state some basic facts. Let $R$ denote a ring. 

\begin{definition}\label{definition 2.1}
A map $d:R \rightarrow R$ that satisfies $d(a + b) = d(a) + d(b)$ and $d(ab) = d(a) b + a d(b)$ for all $a, b \in R$, is called a derivation of $R$.
\end{definition}

\begin{definition}\label{definition 2.2}
A derivation $d:R \rightarrow R$ is called inner if there exists some $b \in R$ such that $d(a) = ab - ba$ for all $a \in R$, and then $d$ is denoted by $d = d_{b}$.
\end{definition} 

\begin{definition}\label{definition 2.3}
A derivation $d:R \rightarrow R$ is called outer if it is not inner.
\end{definition}

\begin{definition}\label{definition 2.4}
If $S$ is a subring of $R$, then a derivation $d: R \rightarrow R$ is called an $S$-derivation if $d(s) = 0$ for all $s \in S$. 
\end{definition}

\begin{notation}\label{notation 2.5}
We denote the set of all derivations of $R$ by $\der(R)$, the set of all inner derivations of $R$ by $\der_{\inn}(R)$ and the set of all $S$-derivations of $R$ by $\der_{S}(R)$. Also, $\ch(\FF )$ denotes the characteristic of field $\FF $ and $\gcd(n,p)$ denotes the greatest common divisor of $n$ and $p$.
\end{notation}

\begin{remark}\label{remark 2.6}
If $R$ is a ring with unity $1$ and $d \in \der(R)$, then $d(1) = 0$. If $R$ is commutative, then $\der(R)$ becomes an $R$-module with respect to the componentwise sum and module action, and $\der_{\inn}(R)$ and $\der_{S}(R)$ become its $R$-submodules. In particular, if $R = \mathcal{A}$ is an algebra over a field $\FF $, then $\der(\mathcal{A})$ becomes an $\FF $-module as well, and $\der_{\inn}(\mathcal{A})$ and $\der_{\FF }(\mathcal{A})$ (for $S = \FF $) become its $\FF $-submodules.
\end{remark}

\begin{definition}\label{definition 2.7}
If $R$ is a ring and $G$ is a group, then the group ring of $G$ over $R$ is defined as the set $$RG = \{\sum_{g \in G} a_{g} g \mid a_{g} \in R, \forall g \in G \hspace{0.2cm} \text{and} \hspace{0.2cm} |\text{supp}(\alpha)| < \infty \},$$ where for $\alpha = \sum_{g \in G} a_{g} g$, $\text{supp}(\alpha)$ denotes the support of $\alpha$ that consists of elements from $G$ that appear in the expression of $\alpha$. $RG$ is a ring concerning the componentwise addition and multiplication defined respectively by: For $\alpha = \sum_{g \in G} a_{g} g$, $\beta = \sum_{g \in G} b_{g} g$ in $RG$, $$(\sum_{g \in G} a_{g} g ) + (\sum_{g \in G} b_{g} g) = \sum_{g \in G}(a_{g} + b_{g}) g \hspace{0.2cm} \text{and} \hspace{0.2cm} \alpha \beta = \sum_{g, h \in G} a_{g} b_{h} gh.$$ If the ring $R$ is commutative having unity $1$ and the group $G$ is abelian having identity $e$, then $RG$ becomes a commutative unital algebra over $R$ with identity $1 = 1e$. We adopt the convention that empty sums are $0$ and empty products are $1$.
\end{definition}

\begin{definition}\label{definition 2.8}
For a commutative unital ring $R$ and a group $G$, the homomorphism $\varepsilon:RG \rightarrow R$ defined by $\varepsilon(\sum_{g \in G}\lambda_{g}g) = \sum_{g \in G} \lambda_{g}$ is called the augmentation mapping of $RG$ and its kernel $\Delta(G) = \{\sum_{g \in G}\lambda_{g}g \in RG \mid \sum_{g \in G} \lambda_{g} = 0\}$ is called the augmentation ideal of $RG$.
\end{definition}

\begin{definition}\label{definition 2.9}
For a group $G$, a commutative ring $R$ and a finite conjugacy class $C$ of $G$, the sum $\hat{C} = \sum_{x \in C} x$ is called a class sum of $G$ over $R$.
\end{definition}

\begin{lemma}[{\cite[Theorem 3.6.2]{CPM2002}}]\th\label{lemma 2.10}
Let $G$ be a group, $R$ be a commutative ring and $\{C_{i}\}_{i \in I}$ be the set of all conjugacy classes of $G$ which contain only a finite number of elements. Then the set $\{\hat{C}_{i}\}$ of all class sums forms a basis of $Z(RG)$, the center of $RG$.
\end{lemma}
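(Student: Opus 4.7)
The plan is to verify the three standard properties that turn $\{\hat{C}_i\}_{i \in I}$ into a basis of $Z(RG)$: each $\hat{C}_i$ lies in $Z(RG)$, the family is $R$-linearly independent, and every central element is an $R$-linear combination of such class sums.

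First I would show centrality. Fix $i \in I$ and $h \in G$. The map $x \mapsto h^{-1} x h$ is a bijection of the conjugacy class $C_i$ onto itself (this is the defining property of a conjugacy class, and finiteness of $C_i$ is not needed for this step), so
\[
h^{-1} \hat{C}_i h \;=\; \sum_{x \in C_i} h^{-1} x h \;=\; \sum_{y \in C_i} y \;=\; \hat{C}_i,
\]
which gives $\hat{C}_i h = h \hat{C}_i$. Since $G$ is an $R$-basis of $RG$ and the coefficients in $R$ commute with group elements, linearity extends this to $\hat{C}_i \alpha = \alpha \hat{C}_i$ for every $\alpha \in RG$, so $\hat{C}_i \in Z(RG)$.

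Next I would handle linear independence. The conjugacy classes $\{C_i\}$ are pairwise disjoint subsets of $G$, and in each $\hat{C}_i$ every group element appears with coefficient $1$. Thus a nontrivial $R$-linear relation $\sum_{i} r_i \hat{C}_i = 0$ would give a nontrivial $R$-linear relation among distinct elements of the $R$-basis $G$ of $RG$, forcing each $r_i = 0$.

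The main content, and the step I expect to require the most care, is spanning: showing that every $\alpha = \sum_{g \in G} \alpha_g g \in Z(RG)$ is a finite $R$-linear combination of class sums of \emph{finite} conjugacy classes. From $h^{-1}\alpha h = \alpha$, reindexing $g \mapsto hgh^{-1}$ yields
\[
\sum_{g \in G} \alpha_{hgh^{-1}}\, g \;=\; \sum_{g \in G} \alpha_g\, g,
\]
so $\alpha_{hgh^{-1}} = \alpha_g$ for all $g,h \in G$; that is, the coefficient function is constant on conjugacy classes. Now the support of $\alpha$ is finite, yet it is a union of conjugacy classes (by the constancy just established), so every class meeting $\mathrm{supp}(\alpha)$ must itself be finite and belong to $\{C_i\}_{i \in I}$. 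Writing $r_i$ for the common value of $\alpha_g$ on $C_i$, we obtain $\alpha = \sum_{i} r_i \hat{C}_i$ with only finitely many nonzero $r_i$. Combined with the first two steps, this proves that $\{\hat{C}_i\}_{i \in I}$ is an $R$-basis of $Z(RG)$.
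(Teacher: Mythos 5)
Your proof is correct and complete; the paper itself gives no proof of this lemma, citing it directly from Milies--Sehgal (Theorem 3.6.2), and your argument (centrality via conjugation permuting each class, independence from disjointness of classes, and spanning via constancy of the coefficient function on conjugacy classes plus finiteness of the support) is precisely the standard proof found in that reference. In particular, you correctly handle the one delicate point -- that a conjugacy class meeting the support of a central element must be entirely contained in the (finite) support and hence be finite.
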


\subsection{Some useful results}\label{subsection 2.2}
Let $R$ be a commutative unital ring and $G = \langle X \mid Y \rangle$ be a group with $X$ as its set of generators and $Y$ as the set of relators. In \cite{Creedon2019}, the authors have studied the derivations of the group ring $RG$ in terms of the generators and relators of the group $G$. They have given a necessary and sufficient condition under which a map $f:X \rightarrow RG$ can be extended to a derivation of $RG$:
\begin{theorem}[{\cite[Theorem 2.5]{Creedon2019}}]\th\label{theorem 2.11}
Let $G = \langle X \mid Y \rangle$ be a group with $X$ as its set of generators and $Y$ the set of relators. Let $F_{X}$ denote the free group on $X$ and $\phi: F_{X} \rightarrow G$ the natural onto homomorphism. Let $R$ be a commutative ring with unity and $f:X \rightarrow RG$ be a map. Then 
\begin{enumerate}
\item[(i)] $f$ can be uniquely extended to a map $\tilde{f}:F_{X} \rightarrow RG$ satisfying \begin{equation}\label{eq 2.1}
\tilde{f}(vw) = \tilde{f}(v) \phi(w) + \phi(v) \tilde{f}(w), \hspace{0.2cm} \forall \hspace{0.2cm} v, w \in F_{X}.
\end{equation}

\item[(ii)] $f$ can be uniquely extended to an $R$-derivation of $RG$ if and only if $\tilde{f}(y) = 0$ for all $y \in Y$.
\end{enumerate}
\end{theorem}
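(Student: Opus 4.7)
My plan is to treat the two parts separately, building the proof around the observation that the required identity \eqref{eq 2.1} is just the Leibniz rule transported along $\phi$, so $\tilde f$ will behave like a ``derivation from $F_X$ to the $F_X$-bimodule $RG$'' (with actions given by $\phi$).

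For part (i), I would construct $\tilde f$ on reduced words by induction on length. Set $\tilde f(1)=0$ (forced, since \eqref{eq 2.1} applied to $1\cdot 1$ gives $\tilde f(1)=2\tilde f(1)$, hence $\tilde f(1)=0$). For $x\in X$, keep $\tilde f(x)=f(x)$, and for $x^{-1}$, the identity $\tilde f(x x^{-1})=\tilde f(1)=0$ forces
\[
\tilde f(x^{-1}) = -\phi(x^{-1})\,f(x)\,\phi(x^{-1}).
\]
For a reduced word $w=\ell_1\ell_2\cdots\ell_n$ with each $\ell_i\in X\cup X^{-1}$, iterate \eqref{eq 2.1} to obtain the explicit formula
\[
\tilde f(w) \;=\; \sum_{i=1}^{n}\phi(\ell_1\cdots\ell_{i-1})\,\tilde f(\ell_i)\,\phi(\ell_{i+1}\cdots\ell_n).
\]
Take this as the definition, then verify that it is invariant under insertion/deletion of an adjacent pair $\ell\ell^{-1}$, so that $\tilde f$ descends to a well-defined map on $F_X$ (this is where the definition of $\tilde f(x^{-1})$ above is exactly what is needed). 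Finally, \eqref{eq 2.1} for arbitrary $v,w\in F_X$ follows directly from the explicit formula by splitting the index of summation. Uniqueness is automatic because \eqref{eq 2.1} together with the initial values on $X$ and the forced value $\tilde f(1)=0$ determines $\tilde f$ on every reduced word.

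For part (ii), the forward direction is quick: if $D$ is an $R$-derivation of $RG$ extending $f$, then the map $w\mapsto D(\phi(w))$ from $F_X$ to $RG$ satisfies \eqref{eq 2.1} (because $D$ is a derivation and $\phi$ a homomorphism) and agrees with $f$ on $X$, so by the uniqueness in (i) it equals $\tilde f$; applying this to $y\in Y$ gives $\tilde f(y)=D(\phi(y))=D(1)=0$. For the converse, assume $\tilde f(y)=0$ for all $y\in Y$. To produce a derivation, I would first push $\tilde f$ down through $\phi$: since $\ker\phi$ is the normal closure $\langle\langle Y\rangle\rangle$, I need to show $\tilde f$ vanishes on this whole normal subgroup. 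Using \eqref{eq 2.1} one checks three closure properties:
\begin{enumerate}
\item[(a)] $\tilde f(uv)=\tilde f(u)+\tilde f(v)$ whenever $u,v\in\ker\phi$ (since $\phi(u)=\phi(v)=1$);
\item[(b)] $\tilde f(u^{-1})=0$ whenever $u\in\ker\phi$ with $\tilde f(u)=0$ (from $\tilde f(uu^{-1})=0$ and $\phi(u)=1$);
\item[(c)] $\tilde f(wyw^{-1})=0$ for $y\in Y$ and $w\in F_X$ (expand by \eqref{eq 2.1}, use $\phi(y)=1$, $\tilde f(y)=0$ and the identity $\tilde f(w)\phi(w^{-1})+\phi(w)\tilde f(w^{-1})=\tilde f(1)=0$).
\end{enumerate}
Together these say $\tilde f$ kills the normal closure of $Y$, hence factors through $\phi$ as a map $\bar f:G\to RG$ satisfying $\bar f(gh)=\bar f(g)h+g\bar f(h)$. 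Extending $\bar f$ $R$-linearly to $RG$ yields a map $D:RG\to RG$; the additivity and Leibniz rule on $RG$ follow at once from the corresponding relation on $G$ and $R$-bilinearity of multiplication, and $D|_R=0$ because $D(1)=\bar f(1)=\tilde f(1)=0$. Uniqueness of the extension is forced by the values on $X$ and the derivation property.

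The main obstacle I anticipate is the descent step (c): verifying that $\tilde f$ vanishes not just on the generating set $Y$ but on the entire normal closure $\ker\phi$. The calculation is not deep, but one has to be careful that every instance of \eqref{eq 2.1} used inside $\ker\phi$ collapses cleanly because $\phi$ sends those elements to $1$; getting the bookkeeping right for $\tilde f(wyw^{-1})$ and for arbitrary products of conjugates is the technical heart of the argument. Everything else — the inductive construction in (i), the explicit sum formula, and the final $R$-linear extension in (ii) — is routine once this descent is in hand.
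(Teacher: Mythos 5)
This theorem is quoted from Creedon--Hughes and the paper supplies no proof of its own, only the explicit formulas (\ref{eq 2.2})--(\ref{eq 2.3}) for $\tilde f$; your argument is a correct, complete proof built on exactly that construction (your forced value $\tilde f(x^{-1})=-\phi(x^{-1})f(x)\phi(x^{-1})$ is the paper's $-xf(x^{-1})x$ rule, and your sum over letters is (\ref{eq 2.3})). The two points that need care --- invariance of the sum formula under cancellation of adjacent pairs $\ell\ell^{-1}$, and the descent of $\tilde f$ from $Y$ to the full normal closure $\ker\phi$ via your observations (a)--(c) --- are both handled correctly, so nothing is missing.
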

\noindent The map $\tilde{f}:F_{X} \rightarrow RG$ is defined as \begin{equation}\label{eq 2.2} \tilde{f}(x) = \begin{cases} 
f(x) & \hspace{0.2cm} \text{if $x \in X$} \\
-xf(x^{-1})x & \hspace{0.2cm} \text{if $x \in X^{-1}$} \\
0 & \hspace{0.2cm} \text{if $x = 1$}
\end{cases}\end{equation} and then on the whole of $F_{X}$ as \begin{equation}\label{eq 2.3} \tilde{f}(v) = \sum_{i=1}^{k} \left(\prod_{j=1}^{i-1} x_{j} \right) \tilde{f}(x_{i}) \left(\prod_{j=i+1}^{k} x_{j} \right),\end{equation} if $v = \prod_{i=1}^{k} x_{i}$. 
Any derivation $d$ of a group algebra $RG$ is uniquely determined by the image set $\{d(x) \mid x \in X\}$. If the generating set $X$ is finite, say, $X = \{x_{1}, ..., x_{t}\}$, then we denote such a derivation by $d_{(\bar{x_{1}}, ..., \bar{x_{t}})}$, where $\bar{x_{i}} = d(x_{i})$ ($1 \leq i \leq t$). The following theorem guarantees that for a $K$-algebra $\mathcal{A}$ where $K$ is an algebraic extension of a prime field $\FF $, $\der(\mathcal{A}) = \der_{K}(\mathcal{A})$:
\begin{theorem}
[{\cite[Theorem 2.2]{Creedon2019}}]\th\label{theorem 2.12}
Let $\mathcal{A}$ be a $K$-algebra where $K$ is an algebraic extension of a prime field $\FF $ and let $d \in \der(\mathcal{A})$. Then $d(K) = \{0\}$ and $d$ is a $K$-linear map.
\end{theorem}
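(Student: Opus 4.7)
The plan is to establish $d(K) = \{0\}$ first, after which $K$-linearity is an immediate consequence of the Leibniz rule. I would proceed in three logical steps: (a) show $d$ vanishes on the prime subfield $\FF$; (b) extend this vanishing to all of $K$ using the minimal polynomial of each element together with the Leibniz rule; (c) deduce $K$-linearity.

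For step (a), note that by Remark 2.6 we have $d(1) = 0$, so by additivity $d(n \cdot 1) = 0$ for every integer $n$, which already finishes the case $\FF = \mathbb{F}_p$. When $\FF = \mathbb{Q}$, any element has the form $n/m$ with $m \neq 0$ in $K$ (since $\ch(K) = 0$), and from $n = m \cdot (n/m)$ the Leibniz rule yields $0 = d(n) = m \, d(n/m)$; multiplying by $m^{-1} \in K \subseteq \mathcal{A}$ gives $d(n/m) = 0$. For step (b), let $\alpha \in K$ and let $p(x) = x^{t} + c_{t-1}x^{t-1} + \cdots + c_{0} \in \FF[x]$ be its minimal polynomial. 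Applying $d$ to the identity $p(\alpha) = 0$, and using both step (a) (so $d(c_{i}) = 0$) and the iterated Leibniz rule $d(\alpha^{k}) = k \alpha^{k-1} d(\alpha)$ (the powers of $\alpha$ commute with $d(\alpha)$ only in the factor $K$, but since $\alpha \in K$ is central in $\mathcal{A}$ this is fine), one obtains
\begin{equation*}
p'(\alpha) \, d(\alpha) = 0,
\end{equation*}
where $p'$ denotes the formal derivative. The key observation is that the prime field $\FF$ is perfect, so the minimal polynomial $p$ is separable, i.e. $p' \neq 0$, and $\deg p' < \deg p$ forces $p'(\alpha) \neq 0$ in $K$. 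Being a nonzero element of the field $K$, $p'(\alpha)$ is invertible in $K \subseteq \mathcal{A}$, and multiplying the displayed equation by its inverse forces $d(\alpha) = 0$.

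The expected main obstacle is the separability argument at the end of step (b): one must verify that $p'(\alpha) \neq 0$, which hinges on $\FF$ being a perfect field (true for both $\mathbb{Q}$ and $\mathbb{F}_{p}$) so that irreducible polynomials over $\FF$ have no repeated roots. Once $d(K) = \{0\}$ is established, step (c) is routine: for every $k \in K$ and $a \in \mathcal{A}$, the Leibniz rule gives $d(ka) = d(k) a + k \, d(a) = k \, d(a)$, which together with additivity is precisely $K$-linearity. This completes the proof.
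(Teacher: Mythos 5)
Your proof is correct: the reduction to the prime field via $d(1)=0$ and the quotient rule, followed by differentiating the minimal polynomial and invoking separability of irreducible polynomials over the perfect fields $\mathbb{Q}$ and $\mathbb{F}_p$ to get $p'(\alpha)\neq 0$, is a complete argument, and the final step deducing $K$-linearity from $d(K)=\{0\}$ is routine. The paper itself gives no proof of this statement --- it is imported verbatim as \cite[Theorem 2.2]{Creedon2019} --- and your argument is essentially the standard one found in that reference, so there is no substantive divergence to report.
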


The following definition generalizes the notion of augmentation ideal of a group ring.

\begin{definition}\label{definition 2.13}
For a subgroup $H$ of $G$, define $$\Delta '(H) = \{\sum_{g \in G} \lambda_{g} g \in RG \mid \sum_{g \in H} \lambda_{g} = 0 \hspace{0.1cm} \text{\&} \hspace{0.1cm} \sum_{g \in G \setminus H} \lambda_{g} = 0\}.$$ $\Delta '(H)$ is an $R$-submodule of the $R$-module $RG$ and $\Delta '(G) = \Delta(G)$. 
\end{definition}

\begin{definition}\label{definition 2.14}
We call the set $$\bar{C}(\beta) = \{\alpha \in \FF G \mid \alpha \beta = - \beta \alpha\}$$  as the anti-centralizer of $\beta$ in $RG$ and it is an $R$-submodule of the $R$-module $RG$.
\end{definition}

For an abelian group $G$ and a field $\FF $, the group algebra $\FF G$ has no non-zero inner derivations. The theorem below classifies all inner derivations of non-commutative group algebras.

\begin{theorem}\th\label{theorem 2.15}
Let $\FF $ be an arbitrary field of any characteristic and $G$ be a finite group of order $n$ having $r$ conjugacy classes. Then the dimension of $\der_{\inn}(\FF G)$ is $n-r$.
If $C_{1}, ..., C_{r}$ are the all distinct conjugacy classes of $G$ with representatives $x_{1}, ..., x_{r}$ respectively and $s$ is a positive integer such that $|C_{i}| = 1$ for all $i \in \{1, ..., s\}$ and $|C_{i}| \geq 2$ for all $i \in \{s+1, ..., r\}$. Then the set $$\mathcal{B}_{0} = \{d_{g} \mid g \in \bigcup_{i=s+1}^{r} (C_{i} \setminus \{x_{i}\})\}$$ forms an $\FF $-basis of $\der_{\inn}(\FF G)$.
\end{theorem}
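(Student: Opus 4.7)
The plan is to exhibit $\der_{\inn}(\FF G)$ as a quotient of $\FF G$ by $Z(\FF G)$ and then use the class-sum basis of the center (\th\ref{lemma 2.10}) to pin down both the dimension and the explicit basis $\mathcal{B}_0$.

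First, I would introduce the $\FF$-linear map $\Phi : \FF G \to \der_{\inn}(\FF G)$ defined by $\Phi(\beta) = d_{\beta}$, where $d_{\beta}(\alpha) = \alpha\beta - \beta\alpha$. This map is $\FF$-linear (by bilinearity of the commutator) and surjective by the very definition of an inner derivation. Its kernel consists of precisely those $\beta \in \FF G$ with $\alpha\beta = \beta\alpha$ for all $\alpha \in \FF G$, i.e., $\ker \Phi = Z(\FF G)$. By \th\ref{lemma 2.10}, $Z(\FF G)$ admits the class-sum basis $\{\hat{C}_1, \ldots, \hat{C}_r\}$, so $\dim_{\FF} Z(\FF G) = r$. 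The first isomorphism theorem for vector spaces then gives
\[
\dim_{\FF} \der_{\inn}(\FF G) = \dim_{\FF} \FF G - \dim_{\FF} Z(\FF G) = n - r.
\]

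For the basis statement, I would first verify that $|\mathcal{B}_0|$ has the right cardinality:
\[
|\mathcal{B}_0| = \sum_{i=s+1}^{r}(|C_i|-1) = \Bigl(\sum_{i=1}^{r}|C_i| - s\Bigr) - (r-s) = n - r.
\]
Since $\dim_{\FF}\der_{\inn}(\FF G) = n-r$ and every $d_g$ is clearly inner, it suffices to prove $\FF$-linear independence of $\mathcal{B}_0$. Suppose $\sum \lambda_g \, d_g = 0$, summed over $g \in \bigcup_{i=s+1}^{r}(C_i \setminus \{x_i\})$. Setting $\beta = \sum \lambda_g \, g$, we have $\beta \in \ker\Phi = Z(\FF G)$, so by \th\ref{lemma 2.10} we may write
\[
\beta = \sum_{i=1}^{s} \mu_i\, x_i + \sum_{i=s+1}^{r} \nu_i\, \hat{C}_i
\]
for some scalars $\mu_i, \nu_i \in \FF$ (recall $\hat{C}_i = x_i$ when $|C_i|=1$). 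Now compare coefficients in the canonical $G$-basis of $\FF G$: for $i \leq s$ the coefficient of $x_i$ on the left is $0$ (since $\mathrm{supp}(\beta)$ avoids the $x_i$) and on the right is $\mu_i$, forcing $\mu_i = 0$; for $i \geq s+1$ the coefficient of the representative $x_i$ on the left is likewise $0$ and on the right is $\nu_i$, forcing $\nu_i = 0$. Hence $\beta = 0$, and comparing coefficients of the remaining group elements yields $\lambda_g = 0$ for all $g$.

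The main obstacle is minor and is essentially the bookkeeping in the last step: one must carefully distinguish the singleton classes (which contribute central elements $x_i$ directly) from the larger classes (for which the omitted representative $x_i$ plays the role of a ``pivot'' detecting the coefficient $\nu_i$ of $\hat{C}_i$). Once this pivoting is set up, linear independence drops out by coordinate comparison. Since surjectivity of $\Phi$ and the dimension count of $Z(\FF G)$ are standard, the rank-nullity argument combined with the cardinality check $|\mathcal{B}_0| = n-r$ closes the proof.
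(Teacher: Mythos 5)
Your proposal is correct and follows essentially the same route as the paper: the paper also reduces everything to the fact that $\beta \mapsto d_{\beta}$ is $\FF$-linear with kernel $Z(\FF G)$ (it even remarks that the dimension count "follows more directly" from exactly the rank--nullity observation you lead with), and its linear-independence argument for $\mathcal{B}_0$ is the same coefficient comparison against the class-sum basis, using the omitted representatives $x_i$ as pivots. No gaps; your version is just a slightly streamlined write-up of the paper's argument.
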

\begin{proof}
$\text{dim}(\FF G) = n$. By \th\ref{lemma 2.10}, the dimension of $Z(\FF G) = \{z \in \FF G \mid z \alpha = \alpha z, \hspace{0.1cm} \forall \hspace{0.1cm} \alpha \in \FF G\}$ is $r$ and the set $\mathcal{B}_{Z} = \{\hat{C}_{i} \mid 1 \leq i \leq r\}$ of all class sums of $G$ over $\FF $ forms a basis of $Z(\FF G)$. $\mathcal{B}_{Z}$ can be extended to a basis of $\FF G$, say, $\mathcal{B}' = \{\hat{C}_{1}, ..., \hat{C}_{r}, \beta_{r+1}, ..., \beta_{n}\}$. Put $\mathcal{B} = \{d_{\beta_{i}} \mid r+1 \leq i \leq n\}$. We prove that $\mathcal{B}$ is a basis of $\der_{\inn}(\FF G)$ over $\FF $.
Let $\lambda \in \FF $ and $\beta, \beta_{1}, \beta_{2} \in \FF G$. Then for all $\alpha \in \FF G$, $d_{\lambda \beta}(\alpha) = \alpha (\lambda \beta) - (\lambda \beta) \alpha = \lambda (\alpha \beta - \beta \alpha) = (\lambda d_{\beta})(\alpha)$ and $(d_{\beta_{1}} + d_{\beta_{2}})(\alpha) = d_{\beta_{1}}(\alpha) + d_{\beta_{2}}(\alpha) = (\alpha \beta_{1} - \beta_{1} \alpha) + (\alpha \beta_{2} - \beta_{2} \alpha) = \alpha(\beta_{1} + \beta_{2}) - (\beta_{1} + \beta_{2})\alpha = d_{\beta_{1} + \beta_{2}}(\alpha)$. So $d_{\lambda \beta} = \lambda d_{\beta}$ and $d_{\beta_{1} + \beta_{2}} = d_{\beta_{1}} + d_{\beta_{2}}$.

Now let $d \in \der_{\inn}(\FF G)$. Then $d = d_{\beta}$ for some $\beta \in \FF G$. Since $\mathcal{B}'$ is a basis of $\FF G$, therefore, $\beta = \sum_{i=1}^{r} \lambda_{i} \hat{C}_{i} + \sum_{j=r+1}^{n} \mu_{j} \beta_{j}$ for some $\lambda_{i}, \mu_{j} \in \FF $ ($1 \leq i \leq r$, $r+1 \leq j \leq n$). Then 

\begin{eqnarray*}
d_{\beta} & = &  \sum_{i=1}^{r} \lambda_{i} d_{\hat{C}_{i}} + \sum_{j=r+1}^{n} \mu_{j} d_{\beta_{j}}.
\end{eqnarray*}
$\hat{C}_{i} \in Z(\FF G)$ ($1 \leq i \leq r$), so $d_{\hat{C}_{i}}(\alpha) = \alpha \hat{C}_{i} - \hat{C}_{i} \alpha = 0$ for all $\alpha \in \FF G$ so that $d_{\hat{C}_{i}} = 0, \hspace{0.1cm} \forall \hspace{0.1cm} i \in \{1, ..., r\}$. Therefore, $$d_{\beta} = \sum_{j=r+1}^{n} \mu_{j} d_{\beta_{j}}.$$
Therefore, $\mathcal{B}$ spans $\der_{\inn}(\FF G)$ over $\FF $. Now let $\lambda_{i} \in \FF $ ($r+1 \leq i \leq n$) such that $\sum_{i=r+1}^{n} \lambda_{i} d_{\beta_{i}} = 0$. 

$\Rightarrow d_{\sum_{i=r+1}^{n} \lambda_{i} \beta_{i}}(\alpha) = 0, \hspace{0.1cm} \forall \hspace{0.1cm} \alpha \in \FF G$.

$\Rightarrow \alpha \left(\sum_{i=r+1}^{n} \lambda_{i} \beta_{i}\right) - \left(\sum_{i=r+1}^{n} \lambda_{i} \beta_{i}\right)\alpha = 0, \hspace{0.1cm} \forall \hspace{0.1cm} \alpha \in \FF G$.

$\Rightarrow \sum_{i=r+1}^{n} \lambda_{i} \beta_{i} \in Z(\FF G), \hspace{0.1cm} \forall \hspace{0.1cm} \alpha \in \FF G$.

$\Rightarrow \sum_{i=r+1}^{n} \lambda_{i} \beta_{i} = \sum_{j=1}^{r} \mu_{j} \hat{C}_{j}$ for some $\mu_{j} \in \FF $ ($1 \leq j \leq r$) as $\mathcal{B}_{Z}$ is a basis of $Z(\FF G)$.

$\Rightarrow \mu_{j} = 0, \hspace{0.1cm} \forall \hspace{0.1cm} j \in \{1, ..., r\}$ and $\lambda_{i}=0, \hspace{0.1cm} \forall \hspace{0.1cm} i \in \{r+1, ..., n\}$, since $\mathcal{B}'$ being a basis is an $\FF $-linearly independent set. 

\noindent Therefore, the set $\mathcal{B}$ is $\FF $-linearly independent. Hence, $\mathcal{B}$ forms a basis of $\der_{\inn}(\FF G)$ over $\FF $. Since $|\mathcal{B}| = n-r$, therefore, $\text{dim}(\der_{\inn}(\FF G)) = n-r$. Observe that this fact follows more directly from the fact that the map $\theta:\FF G \rightarrow \der_{\inn}(\FF G)$ defined by $\theta(\alpha) = d_{\alpha}$ ($\alpha \in \FF G$) is an $\FF $-linear map with null space $Z(\FF G)$.

Suppose that $\sum_{g \in \bigcup_{i=s+1}^{r} C_{i} \setminus \{x_{i}\}} \lambda_{g} d_{g} = 0$ for $\lambda_{g} \in \FF $ ($g \in \bigcup_{i=s+1}^{r} C_{i} \setminus \{x_{i}\}$). 

$\Rightarrow d_{\sum_{g \in \bigcup_{i=s+1}^{r} C_{i} \setminus \{x_{i}\}} \lambda_{g} g} = 0$.

$\Rightarrow \alpha \left(\sum_{g \in \bigcup_{i=s+1}^{r} C_{i} \setminus \{x_{i}\}} \lambda_{g} g\right) = \left(\sum_{g \in \bigcup_{i=s+1}^{r} C_{i} \setminus \{x_{i}\}} \lambda_{g} g\right)\alpha, \hspace{0.1cm} \forall \hspace{0.1cm} \alpha \in \FF G$.

$\Rightarrow \sum_{g \in \bigcup_{i=s+1}^{r} C_{i} \setminus \{x_{i}\}} \lambda_{g} g \in Z(\FF G)$.

$\Rightarrow \sum_{g \in \bigcup_{i=s+1}^{r} C_{i} \setminus \{x_{i}\}} \lambda_{g} g = \sum_{i=1}^{r} \mu_{i} \hat{C}_{i}$ for some $\mu_{i} \in \FF $ ($1 \leq i \leq r$).

Therefore,
\begin{eqnarray*}
\sum_{i=s+1}^{r} \left(\sum_{g \in C_{i} \setminus \{x_{i}\}} \lambda_{g} g\right) & = & \sum_{i=1}^{r} \mu_{i} \hat{C}_{i} = \sum_{i=1}^{s} \mu_{i} \hat{C}_{i} + \sum_{i=s+1}^{r} \mu_{i} \hat{C}_{i} = \sum_{i=1}^{s} \mu_{i} x_{i} + \sum_{i=s+1}^{r} \mu_{i} \left(\sum_{g \in C_{i}} g\right)  \\ & = &  \sum_{i=1}^{s} \mu_{i} x_{i} + \sum_{i=s+1}^{r} \left(\sum_{g \in C_{i} \setminus \{x_{i}\}} \mu_{i} g\right) + \sum_{i=s+1}^{r} \mu_{i} x_{i}
\end{eqnarray*}

$\Rightarrow \sum_{i=1}^{s} \mu_{i} x_{i} + \sum_{i=s+1}^{r} \mu_{i} x_{i} + \sum_{i=s+1}^{r} \left(\sum_{g \in C_{i} \setminus \{x_{i}\}} (\mu_{i} - \lambda_{g}) g\right) = 0$. 

\noindent Then using the fact that $G$ is an $\FF $-linearly independent subset of $\FF G$ and the conjugacy classes $C_{i}$'s ($1 \leq i \leq r$) are pairwise disjoint, we get that $\mu_{i} = 0$ and $\mu_{i}-\lambda_{g}=0, \hspace{0.1cm} \forall \hspace{0.1cm} g \in C_{i} \setminus \{x_{i}\} \hspace{0.1cm} \text{and} \hspace{0.1cm} \hspace{0.1cm} \forall \hspace{0.1cm} i \in \{s+1, ..., r\}$. So $\lambda_{g} = 0, \hspace{0.1cm} \forall \hspace{0.1cm} g \in \bigcup_{i=s+1}^{r} C_{i} \setminus \{x_{i}\}$.
Therefore, the set $\mathcal{B}_{0}$ is $\FF $-linearly independent.

Suppose that $|C_{i}| = n_{i}, \hspace{0.1cm} \forall \hspace{0.1cm} i \in \{s+1, ..., r\}$. Now $$n = |G| = |\bigcup_{i=1}^{r} C_{i}| = \sum_{i=1}^{r} |C_{i}| = \sum_{i=1}^{s} |C_{i}| + \sum_{j=s+1}^{r} |C_{j}| = s + \sum_{j=s+1}^{r} n_{j}$$ so that $\sum_{j=s+1}^{r} n_{j} = n-s$.
Since the sets $C_{s+1} \setminus \{x_{s+1}\}, ..., C_{r} \setminus \{x_{r}\}$ are pairwise disjoint, therefore, \begin{eqnarray*}
|\mathcal{B}_{0}| & = & \sum_{i=s+1}^{r} |C_{i} \setminus \{x_{i}\}| = \sum_{i=s+1}^{r} (|C_{i}| - 1) = \sum_{i=s+1}^{r} (n_{i}-1) = \left(\sum_{i=s+1}^{r} n_{i}\right) - (r-s) \\ & = & (n-s) - (r-s) = n-r = \text{dim}(\der_{\inn}(\FF G)).\end{eqnarray*}
Therefore, $\mathcal{B}_{0}$ is a basis of $\der_{\inn}(\FF G)$ over $\FF $.
\end{proof}

\section{Derivations of Dihedral Group Algebras}\label{section 3}
Consider the dihedral group $$D_{2n} = \langle a, b \mid a^{n} = 1, b^{2} = 1, (ab)^{2} = 1 \rangle$$ of order $2n$, where $n \geq 3$ is a positive integer. So $D_{2n} = \{a^{i}b^{j} \mid 0 \leq i \leq n-1, 0 \leq j \leq 1\}$. In this section, we classify all derivations of the group algebra $\FF D_{2n}$ over a field $\FF $ of characteristic $0$ or an odd rational prime $p$.

\begin{lemma}[{\cite[Chapter 3]{GordonJames2003}}]\th\label{lemma 3.1}
\begin{enumerate}
\item[(i)] When $n$ is even, $D_{2n}$ has $\frac{n}{2}+3$ conjugacy classes given by $\{1\}$, $\{a^{\frac{n}{2}}\}$, $\{a^{k}, a^{-k}\}$ for $1 \leq k \leq \frac{n}{2} - 1$, $\{a^{2i}b \mid 0 \leq i \leq \frac{n}{2}-1\}$, $\{a^{2i+1}b \mid 0 \leq i \leq \frac{n}{2}-1\}$.

\item[(ii)] When $n$ is odd, $D_{2n}$ has $\frac{n+3}{2}$ conjugacy classes given by $\{1\}$, $\{a^{k}, a^{-k}\}$ for $1 \leq k \leq \frac{n-1}{2}$, $\{a^{i}b \mid 0 \leq i \leq n-1\}$.
\end{enumerate}
\end{lemma}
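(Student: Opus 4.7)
The plan is to compute the conjugacy classes directly from the presentation. First I would derive the commutation rule $ba^{k} = a^{-k}b$ for every $k \in \mathbb{Z}$: from $(ab)^{2} = 1$ together with $b^{2} = 1$ one gets $ab = ba^{-1}$, hence $bab = a^{-1}$, and then $ba^{k}b = a^{-k}$ follows by induction on $k$. This normalises every element of $D_{2n}$ to $a^{i}$ or $a^{i}b$ with $0 \leq i \leq n-1$, and it also shows $(a^{j}b)^{2} = 1$, so $(a^{j}b)^{-1} = a^{j}b$.

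Using this rule, I would next record the four basic conjugation formulae by direct calculation: $a^{j}a^{k}a^{-j} = a^{k}$; $(a^{j}b)\,a^{k}\,(a^{j}b) = a^{-k}$; $a^{j}(a^{i}b)a^{-j} = a^{i+2j}b$; and $(a^{j}b)(a^{i}b)(a^{j}b) = a^{2j-i}b$. The first two identify the conjugacy class of a rotation $a^{k}$ as $\{a^{k}, a^{-k}\}$, a singleton precisely when $n \mid 2k$. The last two show that the conjugacy class of a reflection $a^{i}b$ is $\{a^{m}b : m \equiv i \pmod{\gcd(2,n)}\}$, after noting that $2j-i$ and $i$ have the same parity, so both orbit computations land in the same parity coset of $2\mathbb{Z}$ in $\mathbb{Z}/n\mathbb{Z}$.

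The proof would then conclude by a parity case split. When $n$ is odd, $\gcd(2,n) = 1$, so the coset is all of $\mathbb{Z}/n\mathbb{Z}$ and all reflections fuse into the single class $\{a^{i}b : 0 \leq i \leq n-1\}$; among rotations, only $1$ satisfies $a^{k} = a^{-k}$, so $\{1\}$ is a class and the remaining rotations pair up into the $(n-1)/2$ classes $\{a^{k}, a^{-k}\}$ for $1 \leq k \leq (n-1)/2$, giving $(n+3)/2$ classes in total. When $n$ is even, the coset has index $2$, so the reflections split by the parity of the exponent into the two classes listed in the lemma; the rotations $1$ and $a^{n/2}$ are singleton classes (as $n \mid 2 \cdot (n/2)$), and the remaining $a^{k}$ for $1 \leq k \leq n/2 - 1$ pair with $a^{-k}$, yielding $n/2 + 3$ classes. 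The only delicate piece of bookkeeping is the parity observation in the even case; everything else is a mechanical unwinding of the commutation rule, and a quick sum of class sizes in each parity confirms the partition $|D_{2n}| = 2n$.
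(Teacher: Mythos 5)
Your computation is correct and complete: the commutation rule $ba^{k}=a^{-k}b$, the four conjugation formulae, and the parity case split are exactly the standard argument, and your class sizes sum to $2n$ in each case. The paper itself offers no proof of this lemma, citing it from James--Liebeck, and your direct verification is precisely the argument given in that reference, so there is nothing to add.
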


\begin{lemma}\th\label{lemma 3.2}
Let $\FF $ be a field of characteristic $0$ or $p$, where $p$ is an odd rational prime. Then the following statements hold.
\begin{enumerate}
\item[(i)] The set $$\mathcal{\bar{B}}(b) = \{(a^{i} - a^{-i}), ~ (a^{i} - a^{-i})b \mid i = 1, 2, ..., \lfloor \frac{n-1}{2} \rfloor\}$$ is a basis of $\bar{C}(b)$ over $\FF $.

\item[(ii)] The set $$\mathcal{\bar{B}}(ab) = \{(a^{i} - a^{-i}), ~ a(a^{i} - a^{-i})b \mid i = 1, 2, ..., \lfloor \frac{n-1}{2} \rfloor\}$$ is a basis of $\bar{C}(ab)$ over $\FF $.
\end{enumerate}
\end{lemma}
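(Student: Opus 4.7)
The plan is to fix the natural $\FF$-basis $\{a^i, a^i b : 0 \le i \le n-1\}$ of $\FF D_{2n}$, write a general element as $\alpha = \sum_{i=0}^{n-1}(x_i a^i + y_i a^i b)$ with $x_i, y_i \in \FF$, and then unfold the anti-commutation condition $\alpha\beta + \beta\alpha = 0$ using the dihedral identity $ba = a^{-1}b$ (equivalently $ba^i = a^{-i}b$ and $b \cdot a^i b = a^{-i}$). Matching coefficients in the $\FF$-linearly independent set $D_{2n}$ will translate this condition into a decoupled system of linear equations in the $x_i$ and the $y_i$, and the characteristic hypothesis $\ch(\FF) \neq 2$ will be exactly what is needed to finish.

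For part~(i), taking $\beta = b$, a direct expansion yields $\alpha b = \sum_i x_i a^i b + \sum_i y_i a^i$ and $b\alpha = \sum_i x_i a^{-i}b + \sum_i y_i a^{-i}$. Comparing coefficients in $\alpha b + b\alpha = 0$ gives the two identical symmetric systems $x_k + x_{n-k} = 0$ and $y_k + y_{n-k} = 0$ for every $k \in \{0, 1, \ldots, n-1\}$ (indices mod $n$). The hypothesis $\ch(\FF) \neq 2$ forces $x_0 = y_0 = 0$ from $k=0$, and, when $n$ is even, also kills $x_{n/2}$ and $y_{n/2}$ from $k = n/2$. The remaining relations $x_{n-i} = -x_i$ and $y_{n-i} = -y_i$ for $i \in \{1, \ldots, \lfloor(n-1)/2\rfloor\}$ then exhibit $\alpha$ as an $\FF$-linear combination of the elements listed in $\mathcal{\bar{B}}(b)$. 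Linear independence of $\mathcal{\bar{B}}(b)$ is immediate from that of $D_{2n}$, since the pairs $\{a^i, a^{n-i}\}$ and $\{a^i b, a^{n-i}b\}$ for distinct $i$ in the listed range are pairwise disjoint subsets of $D_{2n}$.

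Part~(ii) follows the same template with $\beta = ab$. Using $(ab)a^j = a^{1-j}b$ and $(ab)(a^j b) = a^{1-j}$, the anti-commutation condition becomes $x_{k-1} + x_{1-k} = 0$ and $y_{k+1} + y_{1-k} = 0$ for every $k$. The first system is, after the reindexing $j = k-1$, the same symmetric system as in part~(i), producing the generators $a^i - a^{-i}$. For the second, the substitution $z_j := y_{j+1}$ converts it into $z_j + z_{-j} = 0$, again identical to the system already solved; the resulting $z$-side generators $(a^j - a^{-j})b$ translate back, via multiplication by $a$ on the left, to $a(a^j - a^{-j})b$ on the $y$-side, exactly as claimed. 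Linear independence follows from that of $D_{2n}$ as before.

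The main (essentially only) obstacle is careful bookkeeping: I need to verify that the single index range $i \in \{1, \ldots, \lfloor(n-1)/2\rfloor\}$ uniformly excludes both the $i = 0$ index (where $2x_0 = 0$ kills the coefficient) and, when $n$ is even, the self-paired index $i = n/2$ (where $2x_{n/2} = 0$ does the same), while also correctly covering each non-trivial orbit $\{i, n-i\}$ exactly once. Once the exponent arithmetic is handled mod $n$ and the parity of $n$ is tracked consistently, the proof reduces to the linear-algebra observation laid out above.
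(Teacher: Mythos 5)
Your proof is correct. For part (i) it is essentially the paper's argument: the paper also just expands $\alpha b = -b\alpha$ in the group basis and reads off the antisymmetry relations (it merely omits the coefficient bookkeeping that you write out, including the $\ch(\FF)\neq 2$ step that kills $x_0$, $y_0$ and, for $n$ even, $x_{n/2}$, $y_{n/2}$). For part (ii) you take a genuinely different route: you redo the coefficient comparison for $\beta = ab$ directly and handle the shifted systems $x_{k-1}+x_{1-k}=0$ and $y_{k+1}+y_{1-k}=0$ by reindexing, whereas the paper instead introduces the automorphism $\theta$ of $D_{2n}$ determined by $a\mapsto a$, $b\mapsto ab$, extends it to an algebra automorphism of $\FF D_{2n}$, and shows $\alpha\in\bar{C}(b)$ if and only if $\theta(\alpha)\in\bar{C}(ab)$, so that $\theta$ transports the basis of $\bar{C}(b)$ to one of $\bar{C}(ab)$. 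Your version is self-contained and makes the index shifts explicit (correctly: the substitution $z_j=y_{j+1}$ does reduce the $y$-system to the one already solved, and left multiplication by $a$ accounts for the factor $a$ in $a(a^i-a^{-i})b$); the paper's automorphism argument buys reusability, since the same transport device is invoked again for $T_{4n}$ and $SD_{8n}$ in Lemmas 4.2 and 5.2 without repeating any computation.
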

\begin{proof}
(i) Let $\alpha \in \bar{C}(b)$. Then from $\alpha b = - b \alpha$ and using the fact that $D_{2n}$ is an $\FF $-basis of $\FF D_{2n}$, we get that $\alpha$ is indeed an $\FF $-linear combination of the elements in $\mathcal{\bar{B}}(b)$. Also, the set $\mathcal{\bar{B}}(b)$ is linearly independent over $\FF $ and hence forms a basis of $\bar{C}(b)$ over $\FF $.

(ii) Define a map $\theta : D_{2n} \rightarrow D_{2n}$ by $\theta(a^{i}b^{j}) = a^{i}(ab)^{j}, \hspace{0.1cm} \forall \hspace{0.1cm} i \in \{0, 1, ..., n-1\}, j \in \{0, 1\}$. Then $\theta$ is an automorphism of $D_{2n}$. Extend $\theta$ $\FF $-linearly to an $\FF $-algebra automorphism of $\FF D_{2n}$. Let $\alpha \in \FF D_{2n}$. We can write $\alpha = \beta + \gamma b$ for some $\beta, \gamma \in \FF  \langle a \rangle$. If $\alpha \in \bar{C}(b)$, then $\alpha b = - b \alpha$ implies that $\beta b = - b \beta$ and $\gamma b = - b \gamma$. Also, $\theta(\alpha) = \beta + \gamma ab$. Therefore, $\theta(\alpha) (ab) = -(ab) \theta(\alpha)$ so that $\theta(\alpha) \in \bar{C}(ab)$. Conversely, suppose $\theta(\alpha) \in \bar{C}(ab)$. Then $\theta(\alpha) (ab) = -(ab) \theta(\alpha)$ gives $\beta b = -b\beta$ and $\gamma b = -b\gamma$ so that $\alpha b = -b \alpha$ and hence $\alpha \in \bar{C}(b)$.
Therefore, $\alpha \in \bar{C}(b)$ if and only if $\theta(\alpha) \in \bar{C}(ab)$. Since $\theta$ is an automorphism of $\FF D_{2n}$ and $\mathcal{\bar{B}}(b)$ is an $\FF $-basis of $\bar{C}(b)$, therefore, $\theta(\mathcal{\bar{B}}(b)) = \mathcal{\bar{B}}(ab)$ becomes an $\FF $-basis of $\bar{C}(ab)$.
\end{proof}

We know that $\der_{\FF }(\FF G)$ forms a vector space over $\FF $. In the theorem below, we determine the dimension and a basis of this vector space.

\begin{theorem}\th\label{theorem 3.3}
Let $\FF $ be a field and $p$ be an odd rational prime. Then the following statements hold.
\begin{enumerate}
\item[(i)] If $\ch(\FF ) = 0$ or $p$ with $\gcd(n,p)=1$, then $\der_{\FF }(\FF D_{2n})$ has dimension $3 \lfloor \frac{n-1}{2} \rfloor$ over $\FF $ and a basis 
\begin{equation*}
\begin{aligned}
\mathcal{B} & = \{d_{(\bar{a}, \bar{b})} \mid (\bar{a}, \bar{b}) \in \{((a^{i} - a^{-i})b,0), ~~ (a(a^{i} - a^{-i})b, (a^{i} - a^{-i})), ~~ (0, (a^{i} - a^{-i})b) \\ &\quad \mid i = 1, 2, ..., \lfloor \frac{n-1}{2} \rfloor\}\}.
\end{aligned}
\end{equation*} 
\item[(ii)] If $\ch(\FF ) = p$ with $\gcd(n,p) \neq 1$, then $\der_{\FF }(\FF D_{2n})$ has dimension $4 \lfloor \frac{n-1}{2} \rfloor$ over $\FF $ and a basis 
\begin{equation*}
\begin{aligned}
\mathcal{B}' & = \{d_{(\bar{a}, \bar{b})} \mid (\bar{a}, \bar{b}) \in \{(a(a^{i} - a^{-i}),0), ~~ (a(a^{i} - a^{-i}), (a^{i} - a^{-i})b), ~~ ((a^{i}-a^{-i})b, 0), \\ &\quad (a(a^{i}-a^{-i})b, (a^{i}-a^{-i})) \mid i = 1, 2, ..., \lfloor \frac{n-1}{2}\rfloor\}\}.
\end{aligned}
\end{equation*}
\end{enumerate}
\end{theorem}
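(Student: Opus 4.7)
The plan is to apply \th\ref{theorem 2.11} to the presentation $D_{2n}=\langle a,b\mid a^{n},b^{2},(ab)^{2}\rangle$, reducing the classification of $\FF$-derivations of $\FF D_{2n}$ to finding all pairs $(\bar a,\bar b):=(d(a),d(b))\in(\FF D_{2n})^{2}$ for which the extension $\tilde f$ of $f(a)=\bar a$, $f(b)=\bar b$ (via (\ref{eq 2.2})--(\ref{eq 2.3})) vanishes on each of the three relators. Using (\ref{eq 2.1}), one computes $\tilde f(b^{2})=\bar b\,b+b\,\bar b$ and $\tilde f((ab)^{2})=\tilde f(ab)(ab)+(ab)\tilde f(ab)$ with $\tilde f(ab)=\bar a\,b+a\,\bar b$, so the $b^{2}$- and $(ab)^{2}$-conditions are $\bar b\in\bar C(b)$ and $\bar a\,b+a\,\bar b\in\bar C(ab)$ respectively; \th\ref{lemma 3.2} supplies explicit bases for both anti-centralizers. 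Writing $\bar a=u_{1}+v_{1}b$ with $u_{1},v_{1}\in\FF\langle a\rangle$, using commutativity inside $\FF\langle a\rangle$ together with $ba^{k}=a^{-k}b$, the $a^{n}$-condition becomes
\[
\tilde f(a^{n}) \;=\; n\,u_{1}\,a^{-1} \;+\; v_{1}\,a\,\Bigl(\sum_{i=0}^{n-1}a^{2i}\Bigr)\,b \;=\; 0,
\]
whose $\FF\langle a\rangle$- and $\FF\langle a\rangle b$-components must vanish independently.

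Let $*$ denote the $\FF$-linear involution $a\mapsto a^{-1}$ on $\FF\langle a\rangle$ and write $\bar b=u_{2}+v_{2}b$. The $b^{2}$-condition then reads $u_{2}^{*}=-u_{2}$ and $v_{2}^{*}=-v_{2}$. Splitting $\tilde f(ab)=\bar a\,b+a\,\bar b=(v_{1}+au_{2})+(u_{1}+av_{2})b$ into its two components and applying \th\ref{lemma 3.2}(ii) (antisymmetry of the $\FF\langle a\rangle$-part and $a\cdot(\text{antisym})$ shape of the $\FF\langle a\rangle b$-part), the $(ab)^{2}$-condition decouples, after using $u_{2}^{*}=-u_{2}$ and $v_{2}^{*}=-v_{2}$, into
\[
v_{1}+v_{1}^{*} \;=\; u_{2}(a^{-1}-a), \qquad u_{1}a^{-1}+u_{1}^{*}a \;=\; 0
\]
(the latter saying that $u_{1}a^{-1}$ is itself $*$-antisymmetric). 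In case (i) with $n$ invertible in $\FF$, the $\FF\langle a\rangle$-part of the $a^{n}$-equation forces $u_{1}=0$; the coupling then pins down $v_{1}^{s}:=(v_{1}+v_{1}^{*})/2=u_{2}(a^{-1}-a)/2$, while $v_{1}^{a}:=(v_{1}-v_{1}^{*})/2$ remains free. The three independent $*$-antisymmetric families $u_{2}$, $v_{2}$, $v_{1}^{a}$ each have dimension $\lfloor\frac{n-1}{2}\rfloor$, giving the stated total and producing the basis $\mathcal B$ when each parameter ranges over $\{a^{i}-a^{-i}:1\le i\le\lfloor\frac{n-1}{2}\rfloor\}$. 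In case (ii), $n=0$ in $\FF$ makes the $u_{1}$-part of the $a^{n}$-equation vacuous, so $u_{1}$ is now an independent antisymmetric family (via $u_{1}a^{-1}$), contributing the extra $\lfloor\frac{n-1}{2}\rfloor$ parameters and yielding the basis $\mathcal B'$.

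The main obstacle will be the careful accounting required by the auxiliary condition $v_{1}\sum_{i=0}^{n-1}a^{2i}=0$ coming from the $\FF\langle a\rangle b$-component of $\tilde f(a^{n})=0$: one must verify that this is automatically satisfied once $v_{1}^{s}=u_{2}(a^{-1}-a)/2$ and $v_{1}^{a}$ is antisymmetric. This check splits according to the parity of $n$: for $n$ odd one has $\sum a^{2i}=\widehat{\langle a\rangle}$ so the condition reduces to $\varepsilon(v_{1})=0$, which follows from $\varepsilon(a^{-1}-a)=0$; for $n$ even one has $\sum a^{2i}=2\,\widehat{\langle a^{2}\rangle}$ (using $\ch(\FF)\ne2$), and the condition becomes a double coset-augmentation constraint on $\langle a\rangle/\langle a^{2}\rangle$ which one verifies directly for antisymmetric elements. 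A second subtlety is to confirm that $u_{2}\mapsto u_{2}(a^{-1}-a)$ is injective on the antisymmetric subspace of $\FF\langle a\rangle$, which follows from a short analysis of the annihilator of $1-a^{2}$ in $\FF\langle a\rangle$ combined with the hypothesis $\ch(\FF)\ne2$; without this one would overcount $u_{2}$. Once these points are settled, linear independence of $\mathcal B$ and $\mathcal B'$ is checked by inspecting supports in $\FF D_{2n}$.
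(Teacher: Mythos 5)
Your proposal is correct and follows essentially the same route as the paper's proof: reduce via \th\ref{theorem 2.11} to the three relator conditions, split $d(a)$ and $d(b)$ into $\FF\langle a\rangle$- and $\FF\langle a\rangle b$-components, invoke \th\ref{lemma 3.2} for the two anti-centralizers, and case-split on whether $n$ is invertible in $\FF$ --- your involution/antisymmetric-part bookkeeping is just a repackaging of the paper's coefficient comparison in the bases $\mathcal{\bar{B}}(b)$ and $\mathcal{\bar{B}}(ab)$, and the paper carries out the same final verification that the resulting $\beta$ automatically satisfies the $a^{n}$-condition. The only superfluous item is your ``second subtlety'': injectivity of $u_{2}\mapsto u_{2}(a^{-1}-a)$ is not needed, since $u_{2}$ is read off directly from $d(b)$ rather than recovered from $v_{1}$.
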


\begin{proof}
The relators of the group $D_{2n}$ are $a^{n}, b^{2}$ and $(ab)^{2}$.  Let $f:X = \{a, b\} \rightarrow \FF D_{2n}$ be a map that can be extended to an $\FF $-derivation of $\FF D_{2n}$. By \th\ref{theorem 2.11}, this is possible if and only if $\tilde{f}(a^{n}) = 0, \hspace{0.1cm} \tilde{f}(b^{2}) = 0, \hspace{0.1cm} \text{and} \hspace{0.1cm} \tilde{f}((ab)^{2}) = 0$. We can write $f(a)$ as $f(a) = \alpha + \beta b$ for some $\alpha, \beta \in \FF  \langle a \rangle$. Then using (\ref{eq 2.3}),
\begin{eqnarray*}
\tilde{f}(a^{n}) & = & \sum_{i=1}^{n} \left(\prod_{j=1}^{i-1} a \right) \tilde{f}(a) \left(\prod_{j=i+1}^{n} a \right) = \sum_{j=0}^{n-1} a^{j} f(a) a^{n-(j+1)} = \sum_{j=0}^{n-1} a^{j} (\alpha + \beta b) a^{n-(j+1)}. 
\end{eqnarray*}

\begin{equation}\label{eq 3.1}
\begin{aligned}
 \Rightarrow \tilde{f}(a^{n}) = n \alpha a^{n-1} + \sum_{j=0}^{n-1}a^{2j+1} \beta b.
\end{aligned} 
 \end{equation}
Right multiplying both sides of $\tilde{f}(a^{n}) = 0$ by $a$, we get that $\tilde{f}(a^{n}) = 0$ if and only if $n \alpha = 0$ and $\sum_{j=0}^{n-1}a^{2j} \beta = 0$. Also, $\tilde{f}(b^{2}) = \tilde{f}(b) \phi(b) + \phi(b) \tilde{f}(b) = f(b) b + b f(b)$ and $\tilde{f}((ab)^{2}) = \tilde{f}(ab) \phi(ab) + \phi(ab) \tilde{f}(ab) = \tilde{f}(ab) (ab) + (ab) \tilde{f}(ab)$. So $\tilde{f}(b^{2}) = 0$ if and only if $f(b) \in \tilde{C}(b)$ and $\tilde{f}((ab)^{2}) = 0$ if and only if $\tilde{f}(ab) \in \bar{C}(ab)$. Now, there are two possibilities (i) and (ii).\vspace{10pt}

\textbf{(i)} $\ch(\FF ) = 0$ or $p$ with $\gcd(n,p)=1$: Then $n \alpha = 0$ if and only if $\alpha = 0$. Again, we have two possible cases: $n$ even and $n$ odd.

\textbf{Case 1:} $n$ is even. Then $\sum_{j=0}^{n-1}a^{2j} \beta = 0$ if and only if $\left( \sum_{j=0}^{\frac{n}{2}-1} a^{2j} \beta \right) = 0$. Let $\beta = \sum_{i=0}^{n-1} \lambda_{i} a^{i}$ for some $\lambda_{i} \in \FF $ ($0 \leq i \leq n-1$). Then,
\begin{eqnarray*}
0 & = & \sum_{j=0}^{\frac{n}{2}-1} a^{2j} \beta = \left( \sum_{\substack{i=0 \\ i \hspace{0.1cm} \text{even}}}^{n-1} \lambda_{i} \right) \left(\sum_{j=0}^{\frac{n}{2}-1} a^{2j}\right) + \left( \sum_{\substack{i=0 \\ i \hspace{0.1cm} \text{odd}}}^{n-1} \lambda_{i} \right) \left(\sum_{j=0}^{\frac{n}{2}-1} a^{2j+1}\right).
\end{eqnarray*}
So $\sum_{\substack{i=0 \\ i \hspace{0.1cm} \text{even}}}^{n-1} \lambda_{i} = 0$ and $\sum_{\substack{i=0 \\ i \hspace{0.1cm} \text{odd}}}^{n-1} \lambda_{i} = 0$ so that $\beta \in \Delta '(\langle  a^{2} \rangle)$. Therefore, in this case, $f(a) = \beta b$, where $\beta \in \Delta '(\langle  a^{2} \rangle)$.\vspace{6pt}  

\textbf{Case 2:} $n$ is odd. Then $\sum_{j=0}^{n-1}a^{2j}\beta = 0$ implies that $\sum_{j=0}^{n-1}a^{j}\beta = 0$. Therefore, \begin{eqnarray*}
0 & = & \sum_{j=0}^{n-1} a^{j} \beta = \sum_{i=0}^{n-1} \left(\lambda_{i} \left(\sum_{j=0}^{n-1} a^{i+j}\right)\right) = \left(\sum_{i=0}^{n-1} \lambda_{i}\right)\left(\sum_{k=0}^{n-1} a^{k}\right).
\end{eqnarray*}
So $\sum_{i=0}^{n-1}\lambda_{i} = 0$ so that $\beta \in \Delta(\langle a \rangle)$. Therefore, $f(a) = \beta b$, where $\beta \in \Delta(\langle a \rangle)$.

Now we come out of the two cases and proceed for a general proof. Since $f(a) = \beta b$, so $\beta = \tilde{f}(ab) - af(b)$. Since $f(b) \in \bar{C}(b)$ and by \th\ref{lemma 3.2} (i), $\mathcal{\bar{B}}(b)$ is a basis of $\bar{C}(b)$ over $\FF $, so \begin{equation}\label{eq 3.2}
f(b) = \sum_{i=1}^{\lfloor \frac{n-1}{2}\rfloor} \mu_{i} (a^{i}-a^{-i}) + \sum_{i=1}^{\lfloor \frac{n-1}{2} \rfloor} \nu_{i} (a^{i}-a^{-i})b\end{equation} for some $\mu_{i}, \nu_{i} \in \FF $ ($1 \leq i \leq \lfloor \frac{n-1}{2} \rfloor$). Further since $\tilde{f}(ab) \in \bar{C}(ab)$ and by \th\ref{lemma 3.2} (ii), $\mathcal{\bar{B}}(ab)$ is a basis of $\bar{C}(ab)$ over $\FF $, so \begin{equation}\label{eq 3.3}
\tilde{f}(ab) = \sum_{i=1}^{\lfloor \frac{n-1}{2} \rfloor} \delta_{i} (a^{i}-a^{-i}) + \sum_{i=1}^{\lfloor \frac{n-1}{2} \rfloor} \gamma_{i} a(a^{i}-a^{-i})b\end{equation} for some $\delta_{i}, \gamma_{i} \in \FF $ ($1 \leq i \leq \lfloor \frac{n-1}{2} \rfloor$). Therefore, $\beta = \tilde{f}(ab) - af(b)$ implies that $$\beta = \sum_{i=1}^{\lfloor \frac{n-1}{2} \rfloor} \delta_{i} (a^{i}-a^{-i}) + \sum_{i=1}^{\lfloor \frac{n-1}{2} \rfloor} \gamma_{i} a(a^{i}-a^{-i})b - \sum_{i=1}^{\lfloor \frac{n-1}{2} \rfloor} \mu_{i} a(a^{i}-a^{-i}) - \sum_{i=1}^{\lfloor \frac{n-1}{2} \rfloor} \nu_{i} a(a^{i}-a^{-i})b.$$
But then the fact that $\beta \in \FF  \langle a \rangle$ implies that $\gamma_{i} = \nu_{i}, \hspace{0.1cm} \forall \hspace{0.1cm} i \in \{1, ...,  \lfloor \frac{n-1}{2} \rfloor\}$. Therefore, $$\beta = \sum_{i=1}^{\lfloor \frac{n-1}{2} \rfloor} \delta_{i} (a^{i}-a^{-i}) - \sum_{i=1}^{\lfloor \frac{n-1}{2} \rfloor} \mu_{i} a(a^{i}-a^{-i}).$$ Note that the above expression for $\beta$ is indeed in $\Delta '(\langle a^{2} \rangle)$ and $\Delta(\langle a \rangle)$.
So $f(a) = \beta b$ where $\beta$ is as found above. Put \begin{equation*}
\begin{aligned}
\mathcal{B} & = \{d_{(\bar{a}, \bar{b})} \mid (\bar{a}, \bar{b}) \in \{((a^{i} - a^{-i})b,0), ~~ (a(a^{i} - a^{-i})b, (a^{i} - a^{-i})), ~~ (0, (a^{i} - a^{-i})b) \\ &\quad \mid i = 1, 2, ..., \lfloor \frac{n-1}{2} \rfloor\}\}.
\end{aligned}
\end{equation*}
We prove that $\mathcal{B}$ is a basis of $\der_{\FF }(\FF D_{2n})$ over $\FF $. If $c_{1}, c_{2} \in \FF $ and $\alpha_{1}, \alpha_{2}, \beta_{1}, \beta_{2} \in \FF D_{2n}$, then 
\begin{eqnarray*}\left(c_{1}d_{(\alpha_{1}, \beta_{1})} + c_{2}d_{(\alpha_{2}, \beta_{2})}\right)(a) & = & c_{1}d_{(\alpha_{1}, \beta_{1})}(a) + c_{2}d_{(\alpha_{2}, \beta_{2})}(a) = c_{1}\alpha_{1} + c_{2}\alpha_{2} \\ & = & d_{(c_{1}\alpha_{1} + c_{2} \alpha_{2}, c_{1}\beta_{1} + c_{2} \beta_{2})}(a).\end{eqnarray*}
Similarly, $d_{(c_{1}\alpha_{1} + c_{2} \alpha_{2}, c_{1}\beta_{1} + c_{2} \beta_{2})}(b) = \left(c_{1}d_{(\alpha_{1}, \beta_{1})} + c_{2}d_{(\alpha_{2}, \beta_{2})}\right)(b)$. So by using the fact that any $\FF $-derivation of $\FF D_{2n}$ is completely determined by its image values $d(a)$ and $d(b)$, we get from above that $c_{1}d_{(\alpha_{1}, \beta_{1})} + c_{2}d_{(\alpha_{2}, \beta_{2})} = d_{(c_{1}\alpha_{1} + c_{2} \alpha_{2}, c_{1}\beta_{1} + c_{2} \beta_{2})}$. Therefore, $\mathcal{B}$ spans $\der_{\FF }(\FF D_{2n})$ over $\FF $. Now let $r_{i}, s_{i}, t_{i} \in \FF $ ($1 \leq i \leq \lfloor \frac{n-1}{2} \rfloor$) such that $$\sum_{i=1}^{\lfloor \frac{n-1}{2} \rfloor} r_{i}d_{((a^{i}-a^{-i})b,0)} + \sum_{i=1}^{\lfloor \frac{n-1}{2} \rfloor} s_{i}d_{(a(a^{i}-a^{-i})b,(a^{i}-a^{-i}))} +\sum_{i=1}^{\lfloor \frac{n-1}{2} \rfloor} t_{i}d_{(0,(a^{i}-a^{-i}))} = 0,$$ where $0$ on the right denotes the zero derivation.

$\Rightarrow d_{\left(\sum_{i=1}^{\lfloor \frac{n-1}{2} \rfloor} r_{i}(a^{i}-a^{-i})b + \sum_{i=1}^{\lfloor \frac{n-1}{2} \rfloor} s_{i}a(a^{i}-a^{-i})b, \sum_{i=1}^{\lfloor \frac{n-1}{2} \rfloor} s_{i}(a^{i}-a^{-i}) + \sum_{i=1}^{\lfloor \frac{n-1}{2} \rfloor} t_{i}(a^{i}-a^{-i})\right)} = 0$. 

\noindent This implies that values of the function on the left take zero value at all points in the domain $\FF D_{2n}$, in particular, at $a$ and $b$. Therefore,

$\sum_{i=1}^{\lfloor \frac{n-1}{2} \rfloor} r_{i}(a^{i}-a^{-i})b + \sum_{i=1}^{\lfloor \frac{n-1}{2} \rfloor} s_{i}a(a^{i}-a^{-i})b = 0$ and $\sum_{i=1}^{\lfloor \frac{n-1}{2} \rfloor} s_{i}(a^{i}-a^{-i}) + \sum_{i=1}^{\lfloor \frac{n-1}{2} \rfloor} t_{i}(a^{i}-a^{-i}) = 0$.

But the $\FF $-linear independence of the elements $(a^{i}-a^{-i})$, $(a^{i}-a^{-i})b$, $a(a^{i}-a^{-i})b$ ($1 \leq i \leq \lfloor \frac{n-1}{2} \rfloor$) implies that $r_{i} = 0$, $s_{i} = 0$ and $t_{i} = 0$ for all $i \in \{1, 2, ..., \lfloor \frac{n-1}{2} \rfloor\}$. Therefore, $\mathcal{B}$ is an $\FF $-linearly independent subset of $\der_{\FF }(\FF D_{2n})$. Hence, $\mathcal{B}$ is an $\FF $-basis of $\der_{\FF }(\FF D_{2n})$. Further since $|\mathcal{B}| = 3\lfloor \frac{n-1}{2}\rfloor$, therefore, the dimension of $\der_{\FF }(\FF D_{2n})$ over $\FF $ is $3 \lfloor \frac{n-1}{2}\rfloor$.\vspace{10pt}

\textbf{(ii)} $\ch(\FF ) = p$ with $\gcd(n,p)=p$: Then $n \alpha a^{n-1} = 0$. Therefore, in this case, (\ref{eq 3.1}) becomes $\tilde{f}(a^{n}) = \sum_{j=0}^{n-1}a^{2j+1}\beta b$. Now proceeding as in (i), it can be shown that when $n$ is even, $\tilde{f}(a^{n}) = 0$ if and only if $\beta \in \Delta '(\langle a^{2} \rangle)$ and when $n$ is odd, $\tilde{f}(a^{n}) = 0$ if and only if $\beta \in \Delta(\langle a \rangle)$. Therefore, $f(a) = \alpha + \beta b$, where $\begin{cases}
\beta \in \Delta '(\langle a^{2} \rangle) & \text{if $n$ is even} \\
\beta \in \Delta(\langle a \rangle) & \text{if $n$ is odd}
\end{cases}$. Now using $f(a)b = \tilde{f}(ab) - af(b)$, (\ref{eq 3.2}) and (\ref{eq 3.3}), we get that $$\alpha = \sum_{i=1}^{\lfloor \frac{n-1}{2} \rfloor} \gamma_{i} a(a^{i}-a^{-i}) - \sum_{i=1}^{\lfloor \frac{n-1}{2} \rfloor} \nu_{i} a(a^{i}-a^{-i}) \hspace{0.1cm} \text{and} \hspace{0.1cm} \beta = \sum_{i=1}^{\lfloor \frac{n-1}{2} \rfloor} \delta_{i} (a^{i}-a^{-i}) - \sum_{i=1}^{\lfloor \frac{n-1}{2} \rfloor} \mu_{i} a(a^{i}-a^{-i}).$$
Note that the above expression for $\beta$ indeed belongs to $\Delta ' (\langle a^{2} \rangle)$ when $n$ is even and to $\Delta(\langle a \rangle)$ when $n$ is odd. Therefore, $f(a) = \alpha + \beta b$ where $\alpha$ and $\beta$ are as found above. Put \begin{equation*}
\begin{split}
\mathcal{B}' & = \{d_{(\bar{a}, \bar{b})} \mid (\bar{a}, \bar{b}) \in \{(a(a^{i} - a^{-i}),0), ~~ (a(a^{i} - a^{-i}), (a^{i} - a^{-i})b), ~~ ((a^{i}-a^{-i})b, 0), \\ &\quad (a(a^{i}-a^{-i})b, (a^{i}-a^{-i})) \mid i = 1, 2, ..., \lfloor \frac{n-1}{2}\rfloor\}\}.
\end{split}
\end{equation*}
Then as shown in part (i), it can be shown that $\mathcal{B}'$ is an $\FF $-basis of $\der_{\FF }(\FF D_{2n})$. Also, since $|\mathcal{B}'| = 4\lfloor \frac{n-1}{2}\rfloor$, therefore, the dimension of $\der_{\FF }(\FF D_{2n})$ over $\FF $ is $4 \lfloor \frac{n-1}{2}\rfloor$. Hence proved.
\end{proof}

\begin{corollary}\th\label{corollary 3.4}
Let $\FF $ be an algebraic extension of a prime field such that $\ch(\FF )$ is either $0$ or an odd rational prime $p$ relatively prime to $n$. Then $\der(\FF D_{2n})$ has dimension $3 \lfloor \frac{n-1}{2} \rfloor$ over $\FF $ and a basis $\mathcal{B}$ given in part (i) of \th\ref{theorem 3.3}.
\end{corollary}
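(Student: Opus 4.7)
The plan is to observe that this corollary follows almost immediately from \th\ref{theorem 2.12} together with part (i) of \th\ref{theorem 3.3}. The only content beyond \th\ref{theorem 3.3}(i) is the upgrade from ``$\FF$-derivation'' to ``arbitrary derivation,'' and this upgrade is handled entirely by \th\ref{theorem 2.12}.

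More explicitly, the first step is to show that under the hypothesis on $\FF$, every derivation of $\FF D_{2n}$ is automatically an $\FF$-derivation. Since $\FF$ is an algebraic extension of a prime field and $\FF D_{2n}$ is an $\FF$-algebra, \th\ref{theorem 2.12} applies and yields $d(\lambda)=0$ for every $\lambda\in\FF$ and every $d\in\der(\FF D_{2n})$. Combined with $\FF$-linearity (also given by \th\ref{theorem 2.12}), this is exactly the defining condition of $\der_{\FF}(\FF D_{2n})$. Hence
\[
\der(\FF D_{2n}) \;=\; \der_{\FF}(\FF D_{2n}).
\]

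The second step is to invoke \th\ref{theorem 3.3}(i) verbatim: since $\ch(\FF)$ is either $0$ or an odd prime $p$ with $\gcd(n,p)=1$, that theorem provides the explicit basis $\mathcal{B}$ and the dimension count $3\lfloor\frac{n-1}{2}\rfloor$ for $\der_{\FF}(\FF D_{2n})$. Transferring this basis along the equality above gives the stated basis and dimension for $\der(\FF D_{2n})$.

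There is essentially no obstacle here; the only thing one must be careful to check is that the hypotheses of \th\ref{theorem 2.12} (that $\FF$ is an algebraic extension of a prime field) and of \th\ref{theorem 3.3}(i) ($\ch(\FF)$ is $0$ or an odd prime coprime to $n$) are both implied by the hypotheses of the corollary, which is immediate from the statement. The proof will therefore be only a few lines long, essentially a concatenation of these two invocations.
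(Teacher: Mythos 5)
Your proposal is correct and follows exactly the paper's own route: invoke \th\ref{theorem 2.12} to identify $\der(\FF D_{2n})$ with $\der_{\FF}(\FF D_{2n})$, then quote \th\ref{theorem 3.3}(i) for the basis and dimension. In fact your writeup is slightly more careful than the paper's one-line proof, which states the inclusion in the (trivial) direction rather than the needed one.
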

\begin{proof}
By \th\ref{theorem 2.12}, every $\FF $-derivation of $\FF D_{2n}$ is a derivation of $\FF D_{2n}$. Now the proof immediately follows from \th\ref{theorem 3.3} (i).
\end{proof}

\begin{theorem}\th\label{theorem 3.5}
Let $\FF $ be a field. Then $\der_{\inn}(\FF D_{2n})$ has dimension $3 \lfloor \frac{n-1}{2} \rfloor$ over $\FF $.

\begin{enumerate}
\item[(i)] When $n$ is even, an $\FF $-basis for 
$\der_{\inn}(\FF D_{2n})$ is 
\begin{equation*}
\begin{aligned}
\mathcal{B}_{e} & = \{d_{g} \mid g \in \{a^{i}, ~ a^{2i}b, ~ a^{2i+1}b \mid 1 \leq i \leq \frac{n}{2}-1\}\}.
\end{aligned}
\end{equation*}

\item[(ii)] When $n$ is odd, an $\FF $-basis for $\der_{\inn}(\FF D_{2n})$ is 
\begin{equation*}
\begin{aligned}
\mathcal{B}_{o} & = \{d_{g} \mid g \in \{a^{i} \mid 1 \leq i \leq \frac{n-1}{2}\} \cup \{a^{i}b \mid 1 \leq i \leq n-1\}\}.
\end{aligned}
\end{equation*}
\end{enumerate}
\end{theorem}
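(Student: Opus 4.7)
The plan is to apply \th\ref{theorem 2.15} directly, using the conjugacy class data for $D_{2n}$ recorded in \th\ref{lemma 3.1}. The dimension count is essentially automatic, and producing the stated bases amounts to choosing a convenient representative in each non-singleton conjugacy class and reading off the remaining elements.

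First I would verify the dimension formula. By \th\ref{theorem 2.15}, $\dim_{\FF}\der_{\inn}(\FF D_{2n}) = |D_{2n}| - r$, where $r$ is the number of conjugacy classes. When $n$ is even, \th\ref{lemma 3.1}(i) gives $r = \tfrac{n}{2}+3$, so
\[
\dim_{\FF}\der_{\inn}(\FF D_{2n}) = 2n - \tfrac{n}{2} - 3 = \tfrac{3(n-2)}{2} = 3\left\lfloor \tfrac{n-1}{2}\right\rfloor.
\]
When $n$ is odd, \th\ref{lemma 3.1}(ii) gives $r = \tfrac{n+3}{2}$, so
\[
\dim_{\FF}\der_{\inn}(\FF D_{2n}) = 2n - \tfrac{n+3}{2} = \tfrac{3(n-1)}{2} = 3\left\lfloor \tfrac{n-1}{2}\right\rfloor.
\]

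Next I would construct the explicit bases by invoking the second part of \th\ref{theorem 2.15}. Fix representatives for the non-singleton conjugacy classes as follows. In case (i) ($n$ even) the singleton classes are $\{1\}$ and $\{a^{n/2}\}$ (so $s=2$); for each remaining class of the form $\{a^{k},a^{-k}\}$ with $1\le k\le \tfrac{n}{2}-1$ choose the representative $a^{-k}$, contributing the element $a^{k}$ to the basis set; for the class $\{a^{2i}b:0\le i\le \tfrac{n}{2}-1\}$ choose the representative $b$, contributing $a^{2i}b$ for $1\le i\le \tfrac{n}{2}-1$; and for $\{a^{2i+1}b:0\le i\le \tfrac{n}{2}-1\}$ choose the representative $ab$, contributing $a^{2i+1}b$ for $1\le i\le \tfrac{n}{2}-1$. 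By \th\ref{theorem 2.15} the resulting set
\[
\mathcal{B}_e = \bigl\{d_g \mid g \in \{a^{i},\ a^{2i}b,\ a^{2i+1}b \mid 1\le i\le \tfrac{n}{2}-1\}\bigr\}
\]
is an $\FF$-basis of $\der_{\inn}(\FF D_{2n})$. A cardinality check gives $|\mathcal{B}_e|=3(\tfrac{n}{2}-1)=3\lfloor \tfrac{n-1}{2}\rfloor$, matching the dimension.

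In case (ii) ($n$ odd) the only singleton class is $\{1\}$ (so $s=1$); for $\{a^{k},a^{-k}\}$ with $1\le k\le \tfrac{n-1}{2}$ I again take $a^{-k}$ as representative (contributing $a^{k}$), and for the single reflection class $\{a^{i}b:0\le i\le n-1\}$ I take $b$ as representative (contributing $a^{i}b$ for $1\le i\le n-1$). Then \th\ref{theorem 2.15} yields
\[
\mathcal{B}_o = \bigl\{d_g \mid g \in \{a^{i}\mid 1\le i\le \tfrac{n-1}{2}\}\cup\{a^{i}b\mid 1\le i\le n-1\}\bigr\},
\]
and $|\mathcal{B}_o|=\tfrac{n-1}{2}+(n-1)=\tfrac{3(n-1)}{2}=3\lfloor \tfrac{n-1}{2}\rfloor$.

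There is no genuine obstacle here; the only care required is in the bookkeeping, namely selecting representatives that produce the exact basis stated in the theorem. Once the representatives above are fixed, both assertions are immediate consequences of \th\ref{theorem 2.15} and \th\ref{lemma 3.1}.
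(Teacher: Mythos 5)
Your proposal is correct and follows exactly the route the paper takes: its proof of this theorem is literally ``Follows from \th\ref{theorem 2.15} and \th\ref{lemma 3.1},'' and you have simply filled in the dimension arithmetic and the choice of class representatives, both of which check out.
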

\begin{proof}
Follows from \th\ref{theorem 2.15} and \th\ref{lemma 3.1}.
\end{proof}

\begin{corollary}\th\label{corollary 3.6}
Let $\FF $ be an algebraic extension of a prime field and $p$ be an odd rational prime.
\begin{enumerate}
\item[(i)] If $\ch(\FF )$ is either $0$ or $p$ with $\gcd(n,p)=1$, then all derivations of $\FF D_{2n}$ are inner, that is,  $\der(\FF D_{2n}) = \der_{\inn}(\FF D_{2n})$. In other words, $\FF D_{2n}$ has no non-zero outer derivations.

\item[(ii)] If $\ch(\FF ) = p$ with $\gcd(n,p) \neq 1$, then $\der_{\inn}(\FF D_{2n}) \subsetneq \der(\FF D_{2n})$. In other words, $\FF D_{2n}$ has non-zero outer derivations.
\end{enumerate}
\end{corollary}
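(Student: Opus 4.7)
The plan is to prove both parts by a direct dimension comparison, since all the ingredients are already in place.

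For part (i), I would first invoke \th\ref{theorem 2.12} to conclude that over an algebraic extension $\FF$ of a prime field every derivation of $\FF D_{2n}$ is automatically an $\FF$-derivation, so $\der(\FF D_{2n}) = \der_{\FF}(\FF D_{2n})$. Then I would apply \th\ref{corollary 3.4} (equivalently \th\ref{theorem 3.3}(i)) to get $\dim_{\FF}\der(\FF D_{2n}) = 3\lfloor\frac{n-1}{2}\rfloor$, and \th\ref{theorem 3.5} to get $\dim_{\FF}\der_{\inn}(\FF D_{2n}) = 3\lfloor\frac{n-1}{2}\rfloor$ as well (splitting into the even and odd $n$ cases is already handled there). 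Since $\der_{\inn}(\FF D_{2n})$ is always an $\FF$-subspace of $\der(\FF D_{2n})$ and the two have equal finite dimension, they coincide.

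For part (ii), I would again start with \th\ref{theorem 2.12} to identify $\der(\FF D_{2n})$ with $\der_{\FF}(\FF D_{2n})$. Now \th\ref{theorem 3.3}(ii) gives $\dim_{\FF}\der(\FF D_{2n}) = 4\lfloor\frac{n-1}{2}\rfloor$, while \th\ref{theorem 3.5} still gives $\dim_{\FF}\der_{\inn}(\FF D_{2n}) = 3\lfloor\frac{n-1}{2}\rfloor$ (note that \th\ref{theorem 3.5} holds for arbitrary $\FF$ and is independent of characteristic). Because $n \geq 3$ forces $\lfloor\frac{n-1}{2}\rfloor \geq 1$, we get the strict inequality $3\lfloor\frac{n-1}{2}\rfloor < 4\lfloor\frac{n-1}{2}\rfloor$, and therefore $\der_{\inn}(\FF D_{2n}) \subsetneq \der(\FF D_{2n})$, i.e., non-zero outer derivations exist.

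There is no real obstacle: the entire argument is bookkeeping of dimensions, and the heavy lifting has already been done in \th\ref{theorem 3.3} and \th\ref{theorem 3.5}. The only thing to be careful about is explicitly citing \th\ref{theorem 2.12} to justify replacing $\der$ by $\der_{\FF}$ under the algebraicity hypothesis on $\FF$, since \th\ref{theorem 3.3} is stated for $\der_{\FF}$ and not for $\der$.
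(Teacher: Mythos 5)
Your proposal is correct and is essentially the paper's argument: the paper's proof of this corollary is just the citation ``Follows from \th\ref{theorem 3.3} (ii), \th\ref{corollary 3.4} and \th\ref{theorem 3.5}'', which unpacks to exactly the dimension comparison you describe ($3\lfloor\frac{n-1}{2}\rfloor$ versus $3\lfloor\frac{n-1}{2}\rfloor$ in part (i), and $3\lfloor\frac{n-1}{2}\rfloor < 4\lfloor\frac{n-1}{2}\rfloor$ in part (ii)), with \th\ref{theorem 2.12} supplying the identification $\der(\FF D_{2n}) = \der_{\FF}(\FF D_{2n})$. Your explicit remarks that \th\ref{theorem 2.12} must also be invoked in part (ii) (where \th\ref{corollary 3.4} does not apply) and that $n \geq 3$ makes the inequality strict are correct and slightly more careful than the paper's one-line proof.
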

\begin{proof}
Follows from \th\ref{theorem 3.3} (ii), \th\ref{corollary 3.4} and \th\ref{theorem 3.5}.
\end{proof}

\section{Derivations of Dicyclic Group Algebras}\label{section 4}
The group $T_{4n}$ has the presentation given by $$T_{4n} = \langle a, b \mid a^{2n}=1, a^{n}=b^{2}, b^{-1}ab = a^{-1} \rangle.$$ So $T_{4n} = \{a^{i}b^{j} \mid 0 \leq i \leq 2n-1, 0 \leq j \leq 1\}$. In this section, we classify all derivations of the group algebra $\FF T_{4n}$ over a field $\FF $ of characteristic $0$ or an odd rational prime $p$. $T_{4n}$ is the well-known generalized quaternion group when $n$ is a power of $2$.

\begin{lemma}[{\cite[Proposition 2.2]{Salahshour2020}}]\th\label{lemma 4.1}
$T_{4n}$ has precisely $n+3$ conjugacy classes given by $$\{1\}, ~~ \{a^{n}\}, ~~ \{a^{i}, a^{-i}\} ~ \text{for} ~ 1 \leq i \leq n-1, ~~ \{a^{2i}b \mid 0 \leq i \leq n-1\}, ~~ \{a^{2i+1}b \mid 0 \leq i \leq n-1\}.$$
\end{lemma}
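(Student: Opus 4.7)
The plan is to verify the listed subsets are conjugacy classes by exhibiting explicit conjugating elements, checking closure under conjugation by the two generators, and then confirming by a cardinality count that we have exhausted $T_{4n}$.

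First I would extract the working conjugation rule. From $b^{-1}ab=a^{-1}$ one gets $ba=a^{-1}b$, and by iteration $ba^{i}=a^{-i}b$ for every integer $i$. Replacing $a$ by $a^{-1}$ produces the mirror identity $ba^{-1}=ab$, which will be needed when conjugating the $b$-coset. Also $b^{2}=a^{n}$ is a power of $a$ and commutes with $b$, so $a^{n}\in Z(T_{4n})$, which immediately gives the two singleton classes $\{1\}$ and $\{a^{n}\}$.

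Next I would deal with the purely cyclic elements $a^{i}$ for $1\le i\le n-1$. Conjugation by any $a^{k}$ fixes $a^{i}$, while $b\,a^{i}\,b^{-1}=a^{-i}$. For $1\le i\le n-1$ we have $2i\not\equiv 0\pmod{2n}$, so $a^{i}\neq a^{-i}$ and the class is exactly $\{a^{i},a^{-i}\}$, of size $2$. As $i$ runs over $1,\dots,n-1$, the pairs $\{a^{i},a^{2n-i}\}$ partition $\{a,a^{2},\dots,a^{2n-1}\}\setminus\{a^{n}\}$, so we obtain $n-1$ two-element classes.

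Then I would handle the coset $\langle a\rangle b$. Using $ba^{-1}=ab$ I compute
\[
a\cdot (a^{i}b)\cdot a^{-1}=a^{i+1}\,b\,a^{-1}=a^{i+1}(ab)=a^{i+2}b,
\]
so conjugation by $a$ shifts the exponent of $a^{i}b$ by $2$; iterating by $a^{k}$ gives $a^{i+2k}b$. Thus any two elements of $\langle a\rangle b$ with exponents of the same parity are already conjugate. In the other direction, $b(a^{i}b)b^{-1}=ba^{i}=a^{-i}b$, which preserves the parity of the exponent. Hence parity is a genuine conjugacy invariant on this coset, yielding exactly the two classes $\{a^{2i}b:0\le i\le n-1\}$ and $\{a^{2i+1}b:0\le i\le n-1\}$, each of size $n$.

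Finally I would close the argument with a count: $1+1+2(n-1)+n+n=4n=|T_{4n}|$, so every element has been accounted for and no classes have been omitted; the total number of classes is $1+1+(n-1)+1+1=n+3$. The only real obstacle is bookkeeping with the relation $ba=a^{-1}b$, but once the derived identity $ba^{-1}=ab$ is in hand each conjugation reduces to a one-line manipulation, so no genuinely hard step remains.
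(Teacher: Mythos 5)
Your proof is correct. Note, however, that the paper does not prove \th\ref{lemma 4.1} at all: it simply imports the statement from \cite[Proposition 2.2]{Salahshour2020}. So your argument is not an alternative to anything in the paper but a self-contained replacement for the citation. It is the standard direct verification: the identities $ba^{i}=a^{-i}b$ and $ba^{-1}=ab$ are derived correctly from the relator $b^{-1}ab=a^{-1}$, the centrality of $a^{n}=b^{2}$ gives the two singleton classes, conjugation by $b$ pairs $a^{i}$ with $a^{-i}$ (distinct precisely because $2i\not\equiv 0\pmod{2n}$ for $1\le i\le n-1$), and on the coset $\langle a\rangle b$ the computation $a(a^{i}b)a^{-1}=a^{i+2}b$ shows each parity class is a single orbit while $b(a^{i}b)b^{-1}=a^{-i}b$ shows parity is an invariant (using that $2n$ is even). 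The closing cardinality check $1+1+2(n-1)+n+n=4n$ confirms exhaustiveness. The one merit of writing this out, as you have, is that the explicit conjugation formulas on the coset $\langle a\rangle b$ are exactly the computations reused implicitly in the paper's \th\ref{theorem 4.5}, so a reader gains something from seeing them; the cost is only a few lines.
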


\begin{lemma}\th\label{lemma 4.2}
Let $\FF $ be a field of characteristic $0$ or $p$, where $p$ is an odd rational prime. Then the following statements hold.
\begin{enumerate} 
\item[(i)] The set $\mathcal{\bar{B}}(b) = \{(a^{i}-a^{-i}), ~ (a^{i}-a^{-i})b \mid i \in \{1, 2, ..., n-1\}\}$ is a basis of $\bar{C}(b)$ over $\FF $.

\item[(ii)] The set $\mathcal{\bar{B}}(a^{n+1}b) = \{(a^{i}-a^{-i}), ~ a^{n+1}(a^{i}-a^{-i})b \mid i \in \{1, 2, ..., n-1\}\}$ is a basis of $\bar{C}(a^{n+1}b)$ over $\FF $.
\end{enumerate}
\end{lemma}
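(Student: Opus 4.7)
The plan mirrors the strategy of \th\ref{lemma 3.2}: prove part (i) by direct computation using the defining relations of $T_{4n}$, and then obtain part (ii) by transporting the basis of $\bar{C}(b)$ through a suitable $\FF$-algebra automorphism of $\FF T_{4n}$.

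\textbf{Part (i).} I would start from a generic element $\alpha = \sum_{i=0}^{2n-1}\lambda_i a^i + \sum_{i=0}^{2n-1}\mu_i a^i b \in \FF T_{4n}$ and compute both $\alpha b$ and $b\alpha$ using the two identities that carry all the structure, namely $ba=a^{-1}b$ (equivalently $ba^i=a^{-i}b$ for every $i$) and $b^2=a^n$. The condition $\alpha b=-b\alpha$ then separates, by comparing coefficients of $T_{4n}$ in its $\FF$-basis form, into two independent systems: one for the $\lambda_i$'s (coming from the part that still carries $b$) yielding $\lambda_i+\lambda_{-i}=0$ for all $i$, and one for the $\mu_i$'s (coming from the part that becomes purely in $\langle a\rangle$ via $b^2=a^n$) yielding $\mu_i+\mu_{-i}=0$ for all $i$ after a reindexing shift by $n$. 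Here the hypothesis that $\ch(\FF)\ne 2$ is what forces $\lambda_0=\lambda_n=0$ and $\mu_0=\mu_n=0$ (the \emph{fixed points} $i\equiv -i\pmod{2n}$). Grouping the remaining $i\in\{1,\dots,n-1\}$ with its negative $-i\equiv 2n-i$ rewrites $\alpha$ as an $\FF$-linear combination of the elements of $\mathcal{\bar{B}}(b)$. Linear independence is immediate because the exponents $\pm 1,\pm 2,\dots,\pm(n-1)$ are pairwise distinct modulo $2n$, so all the group elements appearing are distinct in $T_{4n}$.

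\textbf{Part (ii).} I would produce an $\FF$-algebra automorphism $\theta$ of $\FF T_{4n}$ that sends $b$ to $a^{n+1}b$ and fixes $a$, and then apply it to $\mathcal{\bar{B}}(b)$. The nontrivial step is to verify that the map $a\mapsto a$, $b\mapsto a^{n+1}b$ respects the three defining relations of $T_{4n}$:
\begin{align*}
a^{2n} &= 1, \\
(a^{n+1}b)^2 &= a^{n+1}\bigl(ba^{n+1}\bigr)b = a^{n+1}a^{-(n+1)}b^2 = b^2 = a^n, \\
(a^{n+1}b)^{-1} a (a^{n+1}b) &= b^{-1}a^{-(n+1)} a\, a^{n+1} b = b^{-1}ab = a^{-1}.
\end{align*}
Hence $\theta$ extends to an endomorphism of $T_{4n}$, and since its image contains $a$ and $a^{n+1}b$ (hence also $b$), it is surjective and therefore an automorphism; extending $\FF$-linearly gives an $\FF$-algebra automorphism of $\FF T_{4n}$. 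Because $\theta(b)=a^{n+1}b$, the equivalence $\alpha b = -b\alpha \iff \theta(\alpha)\theta(b) = -\theta(b)\theta(\alpha)$ yields $\theta(\bar{C}(b)) = \bar{C}(a^{n+1}b)$. Applying $\theta$ term by term to $\mathcal{\bar{B}}(b)$ fixes the elements $a^i-a^{-i}$ and sends $(a^i-a^{-i})b$ to $(a^i-a^{-i})a^{n+1}b = a^{n+1}(a^i-a^{-i})b$ (using that $a^{n+1}$ commutes with elements of $\langle a\rangle$). This is precisely $\mathcal{\bar{B}}(a^{n+1}b)$, so it is an $\FF$-basis of $\bar{C}(a^{n+1}b)$.

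\textbf{Main obstacle.} No single step is deep; the one place where I have to be careful is the elimination of the middle terms $\lambda_n, \mu_n$ in part (i). The hypothesis $\ch(\FF)\ne 2$ is exactly what is needed here, since $a^n=a^{-n}$ in $T_{4n}$ (as $|a|=2n$) creates a self-paired index where the anti-commutation condition degenerates to $2\lambda_n=0$. Once this is handled, part (ii) is essentially formal, provided the automorphism calculation above is carried out cleanly.
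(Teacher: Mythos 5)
Your proposal is correct and follows essentially the same route as the paper: part (i) by writing $\alpha=\sum_i\lambda_i a^i+\sum_i\mu_i a^ib$ and extracting $\lambda_i+\lambda_{-i}=0$, $\mu_i+\mu_{-i}=0$ from $\alpha b=-b\alpha$ (with $\ch(\FF)\neq 2$ killing the self-paired indices $0$ and $n$), and part (ii) by transporting $\mathcal{\bar{B}}(b)$ through the automorphism $a^ib^j\mapsto a^i(a^{n+1}b)^j$. Your verification that this assignment respects the relations $a^n=b^2$ and $b^{-1}ab=a^{-1}$ is exactly the check the paper leaves implicit.
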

\begin{proof}
(i) $\mathcal{\bar{B}}(b)$ is clearly an $\FF $-linearly independent subset of $\bar{C}(b)$. Now let $\alpha \in \bar{C}(b)$ and $\alpha = \sum_{i,j}\lambda_{i,j} a^{i}b^{j}$ for some $\lambda_{i,j} \in \FF $ ($0 \leq i \leq 2n-1$, $0 \leq j \leq 1$). Also, put $\lambda_{(2n),j} = \lambda_{0,j}$ for $j \in \{0,1\}$. From $\alpha b = - b \alpha$, we get that $\lambda_{i,j} = - \lambda_{k,j}$ for all $i, k \in \{0, 1, ..., 2n-1\}$ with $i+k = 2n$ and for all $j \in \{0,1\}$. In particular, $\lambda_{0,j} = 0$ and $\lambda_{n,j} = 0$ for $j \in \{0,1\}$. So $\alpha = \sum_{i=1}^{n-1}\lambda_{i,0}(a^{i}-a^{-i}) + \sum_{i=1}^{n-1}\lambda_{i,1}(a^{i}-a^{-i})b$. Therefore, $\mathcal{\bar{B}}(b)$ spans $\bar{C}(b)$ over $\FF $ and hence is an $\FF $-basis of $\bar{C}(b)$.

(ii) The map $\theta : T_{4n} \rightarrow T_{4n}$ defined by $\theta(a^{i}b^{j}) = a^{i}(a^{n+1}b)^{j}, \hspace{0.1cm} \forall \hspace{0.1cm} i \in \{0, 1, ..., 2n-1\}, j \in \{0, 1\}$ is an $\FF $-algebra automorphism of $\FF T_{4n}$. Now the proof follows easily on similar lines as the proof of \th\ref{lemma 3.2}.
\end{proof}

\begin{theorem}\th\label{theorem 4.3}
Let $\FF $ be a field and $p$ be an odd rational prime. Then the following statements hold.
\begin{enumerate}
\item[(i)] If $\ch(\FF ) = 0$ or $p$ with $\gcd(n,p)= 1$, then $\der_{\FF }(\FF T_{4n})$ has dimension $3(n-1)$ over $\FF $ and a basis 
\begin{equation*}
\begin{aligned}
\mathcal{B} & = \{d_{(\bar{a}, \bar{b})} \mid (\bar{a}, \bar{b}) \in \{(a^{n}(a^{i} - a^{-i})b,0), ~~ (a^{n+1}(a^{i} - a^{-i})b, (a^{i} - a^{-i})), ~~ (0, (a^{i} - a^{-i})b) \\ &\quad \mid i = 1, 2, ..., n-1\}\}.
\end{aligned}
\end{equation*}

\item[(ii)] If $\ch(\FF ) =p$ with $\gcd(n,p)\neq 1$, then $\der_{\FF }(\FF T_{4n})$ has dimension $4(n-1)$ over $\FF $ and a basis 
\begin{equation*}
\begin{aligned}
\mathcal{B}' & = \{d_{(\bar{a}, \bar{b})} \mid (\bar{a}, \bar{b}) \in \{(a^{n+1}(a^{i} - a^{-i}),0), ~~ (a(a^{i} - a^{-i}), (a^{i} - a^{-i})b), \\ &\quad (a^{n}(a^{i}-a^{-i})b, 0), ~~ (a^{n+1}(a^{i}-a^{-i})b, (a^{i}-a^{-i})) \mid i = 1, 2, ..., n-1\}\}.
\end{aligned}
\end{equation*}
\end{enumerate}
\end{theorem}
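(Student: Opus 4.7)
The argument parallels that of \th\ref{theorem 3.3}. The group $T_{4n}$ has generating set $X = \{a, b\}$ and three relators $r_{1} = a^{2n}$, $r_{2} = a^{n}b^{-2}$ (encoding $b^{2} = a^{n}$), and $r_{3} = b^{-1}aba$ (encoding $b^{-1}ab = a^{-1}$). By \th\ref{theorem 2.11}, a map $f : X \to \FF T_{4n}$ extends uniquely to an $\FF$-derivation iff $\tilde{f}(r_{i}) = 0$ for $i = 1, 2, 3$. Writing $f(a) = \alpha + \beta b$ with $\alpha, \beta \in \FF \langle a \rangle$, the plan is to translate each of these vanishing conditions into constraints on $\alpha$, $\beta$, and $f(b)$, and then read off the basis.

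For $r_{1}$, using (\ref{eq 2.3}) together with the commutation rule $ba^{k} = a^{-k}b$, a direct calculation gives
\[ \tilde{f}(a^{2n}) \;=\; 2n\, \alpha\, a^{2n-1} \;+\; a \left(\sum_{j=0}^{2n-1} a^{2j}\right) \beta b. \]
Setting this to $0$ forces $2n\alpha = 0$ and $\left(\sum_{j=0}^{2n-1} a^{2j}\right)\beta = 0$. In case (i) the unit $2n$ forces $\alpha = 0$, whereas in case (ii) $\alpha$ is unconstrained by $r_{1}$; the $\beta$-condition reduces, by the analysis of Cases 1 and 2 of the proof of \th\ref{theorem 3.3} (after noting that $\sum_{j=0}^{2n-1} a^{2j} = 2\sum_{j=0}^{n-1}a^{2j}$), to $\beta \in \Delta'(\langle a^{2} \rangle)$. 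For $r_{2}$, I would exploit that $a^{n}$ is central in $\FF T_{4n}$ to rewrite $\tilde{f}(a^{2n}) = 2 a^{n}\tilde{f}(a^{n})$, which forces $\tilde{f}(a^{n}) = 0$ (since $\ch(\FF) \neq 2$); then $\tilde{f}(a^{n}b^{-2}) = a^{n}\tilde{f}(b^{-2}) = -a^{n}\bigl(bf(b) + f(b)b\bigr)$, so $\tilde{f}(r_{2}) = 0$ is equivalent to $f(b) \in \bar{C}(b)$, and \th\ref{lemma 4.2}(i) yields the parameterization $f(b) = \sum_{i=1}^{n-1}\mu_{i}(a^{i}-a^{-i}) + \sum_{i=1}^{n-1}\nu_{i}(a^{i}-a^{-i})b$ with $\mu_{i}, \nu_{i} \in \FF$. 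For $r_{3}$, the key identity is $(a^{n+1}b)^{2} = b^{2} = a^{n}$ in $T_{4n}$; combined with $\tilde{f}(a^{n}) = 0$, this reduces $\tilde{f}(r_{3}) = 0$ to the condition $\tilde{f}(a^{n+1}b) \in \bar{C}(a^{n+1}b)$, and \th\ref{lemma 4.2}(ii) parameterizes $\tilde{f}(a^{n+1}b) = \sum_{i=1}^{n-1}\delta_{i}(a^{i}-a^{-i}) + \sum_{i=1}^{n-1}\gamma_{i}\, a^{n+1}(a^{i}-a^{-i})b$. Expanding $\tilde{f}(a^{n+1}b) = \tilde{f}(a^{n+1})b + a^{n+1}f(b)$ and separating the $\FF\langle a \rangle$ and $\FF\langle a \rangle\, b$ components forces $\gamma_{i} = \nu_{i}$ (mirroring the dihedral computation) and yields explicit $\FF$-linear expressions for $\beta$, and (in case (ii)) for $\alpha$.

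Once this parameterization is in hand, the verification that $\mathcal{B}$ (case (i)) and $\mathcal{B}'$ (case (ii)) are $\FF$-bases proceeds exactly as in \th\ref{theorem 3.3}. Spanning is immediate from the explicit formulas, since every admissible pair $(f(a), f(b))$ is displayed as an $\FF$-linear combination of the listed vectors. Linear independence follows by evaluating a putative relation $\sum c_{\sigma} d_{\sigma} = 0$ separately at $a$ and at $b$ and invoking the $\FF$-linear independence of the distinct elements of $T_{4n}$ in $\FF T_{4n}$. The cardinalities $|\mathcal{B}| = 3(n-1)$ and $|\mathcal{B}'| = 4(n-1)$ then give the asserted dimensions. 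The principal obstacle I anticipate is the $r_{3}$-step: carrying out the expansion of $\tilde{f}(b^{-1}aba)$ via (\ref{eq 2.2}) and (\ref{eq 2.3}) (using $aba = b$ in $T_{4n}$ and $\tilde{f}(b^{-1}) = -b f(b) b$) and verifying cleanly that, modulo $\tilde{f}(r_{1}) = \tilde{f}(r_{2}) = 0$, it really is equivalent to $\tilde{f}(a^{n+1}b) \in \bar{C}(a^{n+1}b)$. A secondary subtlety is matching the new freedom in $\alpha$ (present only in case (ii)) to the two extra basis vectors $(a^{n+1}(a^{i}-a^{-i}), 0)$ and $(a(a^{i}-a^{-i}), (a^{i}-a^{-i})b)$ per index $i$ in $\mathcal{B}'$.
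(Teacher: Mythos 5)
Your plan is essentially the paper's own proof: the same reduction via \th\ref{theorem 2.11} to the three relator conditions, the same identifications $\tilde{f}(a^{2n})=0 \Leftrightarrow \tilde{f}(a^{n})=0$, $\beta \in \Delta'(\langle a^{2}\rangle)$ and $f(b)\in\bar{C}(b)$, and the same use of \th\ref{lemma 4.2} to parameterize and read off the bases; your formulation of the third condition in terms of $\tilde{f}(a^{n+1}b)$ is equivalent to the paper's $\tilde{f}(ab)\in\bar{C}(a^{n+1}b)$ because $\tilde{f}(a^{n+1}b)=a^{n}\tilde{f}(ab)$ with $a^{n}$ a central unit (this also converts your resulting coefficient $(a^{i}-a^{-i})$ for $\beta$ into the stated $a^{n}(a^{i}-a^{-i})=-(a^{n-i}-a^{-(n-i)})$ after reindexing). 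The step you flag as the principal obstacle is indeed where the work lies and it does go through as you predict, with the small caveat that the reduction of $\tilde{f}(r_{3})$ uses not only $\tilde{f}(a^{n})=0$ but also $f(b)b=-bf(b)$ and the centrality of $a^{n}=b^{2}$.
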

\begin{proof}
Let $f:X = \{a, b\} \rightarrow \FF T_{4n}$ can be extended to an $\FF $-derivation of $\FF T_{4n}$. Since the relators of the group $T_{4n}$ are $a^{2n}, a^{n}b^{2}$ and $ab^{-1}ab$, so by \th\ref{theorem 2.11}, this is possible if and only if $\tilde{f}(a^{2n}) = 0, \hspace{0.1cm} \tilde{f}(a^{n}b^{2}) = 0, \hspace{0.1cm} \text{and} \hspace{0.1cm} \tilde{f}(ab^{-1}ab) = 0$. Using (\ref{eq 3.1}), $\tilde{f}(a^{2n}) = 2a^{n}\tilde{f}(a^{n})$ so that $\tilde{f}(a^{2n}) = 0$ if and only if $\tilde{f}(a^{n})=0$. Let $f(a) = \alpha + \beta b$ for some $\alpha, \beta \in \FF  \langle a \rangle$. Then
\begin{equation}\label{eq 4.1}\tilde{f}(a^{n}) = n \alpha a^{n-1} + \sum_{j=0}^{n-1}a^{2j+1} \beta b. \end{equation}
Therefore, $\tilde{f}(a^{n}) = 0$ if and only if $n \alpha = 0$ and $\sum_{j=0}^{n-1}a^{2j} \beta = 0$. Also, $\tilde{f}(b^{2}) = f(b) b + b f(b)$ so that $\tilde{f}(b^{2}) = 0$ if and only if $f(b) \in \tilde{C}(b)$.
Using $\tilde{f}(a^{n}) = 0$, $f(a)b + af(b) = \tilde{f}(ab)$ and (\ref{eq 3.1}), we get that \begin{eqnarray*}
\tilde{f}(ab^{-1}ab) = \tilde{f}(ab)a^{n+1}b + a^{n+1}b\tilde{f}(ab).
\end{eqnarray*}
Therefore, $\tilde{f}(ab^{-1}ab) = 0$ if and only if $\tilde{f}(ab) \in \bar{C}(a^{n+1}b)$. Again, there are two possibilities (i) and (ii).\vspace{10pt}

\textbf{(i)} $\ch(\FF ) = 0$ or $p$ with $\gcd(n,p)=1$: Then $n \alpha = 0$ if and only if $\alpha = 0$. Since $\beta \in \FF \langle a \rangle$, so $\beta = \sum_{i=0}^{2n-1} \lambda_{i} a^{i}$ for some $\lambda_{i} \in \FF $ ($0 \leq i \leq 2n-1$). Also, put $\lambda_{2n} = \lambda_{0}$. Then, \begin{eqnarray*}
0 & = & \sum_{j=0}^{n-1} a^{2j} \beta = \left( \sum_{\substack{i=0 \\ i \hspace{0.1cm} \text{even}}}^{2n-1} \lambda_{i} \right) \left(\sum_{j=0}^{n-1} a^{2j}\right) + \left( \sum_{\substack{i=0 \\ i \hspace{0.1cm} \text{odd}}}^{2n-1} \lambda_{i} \right) \left(\sum_{j=0}^{n-1} a^{2j+1}\right).
\end{eqnarray*}
So $\sum_{\substack{i=0 \\ i \hspace{0.1cm} \text{even}}}^{2n-1} \lambda_{i} = 0$ and $\sum_{\substack{i=0 \\ i \hspace{0.1cm} \text{odd}}}^{2n-1} \lambda_{i} = 0$, that is, $\beta \in \Delta '(\langle a^{2} \rangle)$. Therefore, $f(a) = \beta b$, where $\beta \in \Delta '(\langle a^{2} \rangle)$. Then from $\tilde{f}(ab) = f(a)b + af(b)$, $\beta = (\tilde{f}(ab) - af(b))a^{n}$. Since $f(b) \in \bar{C}(b)$ and by \th\ref{lemma 4.2} (i), $\mathcal{\bar{B}}(b)$ is a basis of $\bar{C}(b)$ over $\FF $, so \begin{equation}\label{eq 4.2}
f(b) = \sum_{i=1}^{n-1} \mu_{i}(a^{i}-a^{-i}) + \sum_{i=1}^{n-1} \nu_{i}(a^{i}-a^{-i})b
\end{equation} for some $\mu_{i}, \nu_{i} \in \FF $ ($1 \leq i \leq n-1$). Further since $\tilde{f}(ab) \in \bar{C}(a^{n+1}b)$ and by \th\ref{lemma 4.2} (ii), $\mathcal{\bar{B}}(a^{n+1}b)$ is a basis of $\bar{C}(a^{n+1}b)$ over $\FF $, so \begin{equation}\label{eq 4.3}
\tilde{f}(ab) = \sum_{i=1}^{n-1} \delta_{i}(a^{i}-a^{-i}) + \sum_{i=1}^{n-1} \gamma_{i}a^{n+1}(a^{i}-a^{-i})b
\end{equation} for some $\delta_{i}, \gamma_{i} \in \FF $ ($1 \leq i \leq n-1$). Therefore, \begin{equation*}
\begin{split}
\beta & = \sum_{i=1}^{n-1} \delta_{i}a^{n}(a^{i}-a^{-i}) + \sum_{i=1}^{n-1} \gamma_{i}(a^{i+1}-a^{-i+1})b - \sum_{i=1}^{n-1} \mu_{i}a^{n+1}(a^{i}-a^{-i}) \\ &\quad + \sum_{i=1}^{n-1} \nu_{n-i}(a^{i+1}-a^{-i+1})b.
\end{split}
\end{equation*}
But since $\beta \in \FF \langle a \rangle$, so $\gamma_{i} = -\nu_{n-i}, \hspace{0.1cm} \forall \hspace{0.1cm} i \in \{1, 2, ..., n-1\}$. Therefore, \begin{equation*}
\begin{aligned}\beta = \sum_{i=1}^{n-1} \delta_{i}a^{n}(a^{i}-a^{-i}) - \sum_{i=1}^{n-1} \mu_{i}a^{n+1}(a^{i}-a^{-i}).
\end{aligned}
\end{equation*} So $f(a) = \beta b$ where $\beta$ is as found above. Then the set \begin{equation*}
\begin{split}
\mathcal{B} & = \{d_{(\bar{a}, \bar{b})} \mid (\bar{a}, \bar{b}) \in \{(a^{n}(a^{i} - a^{-i})b,0), ~~ (a^{n+1}(a^{i} - a^{-i})b, (a^{i} - a^{-i})), ~~ (0, (a^{i} - a^{-i})b) \\ &\quad \mid i = 1, 2, ..., n-1\}\}.
\end{split}
\end{equation*} is a basis of $\der_{\FF }(\FF T_{4n})$ over $\FF $. Since $|\mathcal{B}| = 3(n-1)$, therefore, the dimension of $\der_{\FF }(\FF T_{4n})$ over $\FF $ is $3(n-1)$.\vspace{10pt}

\textbf{(ii)} $\ch(\FF ) = p$ with $\gcd(n,p)=p$: Then (\ref{eq 4.1}) becomes $\tilde{f}(a^{n}) = \sum_{j=0}^{n-1}a^{2j+1}\beta b$. So as in (i), $\tilde{f}(a^{n}) = 0$ if and only if $\beta \in \Delta '(\langle a^{2} \rangle)$. Therefore, $f(a) = \alpha + \beta b$, where $\beta \in \Delta '(\langle a^{2} \rangle)$. Using $f(a)b = \alpha b + \beta a^{n} = \tilde{f}(ab) - af(b)$, (\ref{eq 4.2}) and (\ref{eq 4.3}), we get that $$\alpha = \sum_{i=1}^{n-1} \gamma_{i}a^{n+1}(a^{i}-a^{-i}) - \sum_{i=1}^{n-1} \nu_{i}a(a^{i}-a^{-i}) \hspace{0.1cm} \text{and} \hspace{0.1cm} \beta = \sum_{i=1}^{n-1} \delta_{i}a^{n}(a^{i}-a^{-i}) - \sum_{i=1}^{n-1} \mu_{i}a^{n+1}(a^{i}-a^{-i}).$$ Now the set \begin{equation*}
\begin{split}
\mathcal{B}' & = \{d_{(\bar{a}, \bar{b})} \mid (\bar{a}, \bar{b}) \in \{(a^{n+1}(a^{i} - a^{-i}),0), ~~ (a(a^{i} - a^{-i}), (a^{i} - a^{-i})b), ~~ (a^{n}(a^{i}-a^{-i})b, 0), \\ &\quad  (a^{n+1}(a^{i}-a^{-i})b, (a^{i}-a^{-i})) \mid i = 1, 2, ..., n-1\}\}
\end{split}
\end{equation*} becomes an $\FF $-basis of $\der(\FF T_{4n})$ so that the dimension of $\der_{\FF }(\FF T_{4n})$ over $\FF $ is $4 (n-1)$. Hence proved.
\end{proof}

\begin{corollary}\th\label{corollary 4.4}
Let $\FF $ be an algebraic extension of a prime field such that $\ch(\FF )$ is either $0$ or an odd rational prime $p$ relatively prime to $n$. Then $\der(\FF T_{4n})$ has dimension $3(n-1)$ over $\FF $ and a basis $\mathcal{B}$ as given in part (i) of \th\ref{theorem 4.3}.
\end{corollary}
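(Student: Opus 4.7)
The plan is to reduce the statement to \th\ref{theorem 4.3}(i) by showing that, under the stated hypotheses on $\FF$, every derivation of the group algebra $\FF T_{4n}$ is automatically $\FF$-linear, so that $\der(\FF T_{4n}) = \der_{\FF}(\FF T_{4n})$. Once this identification is in place, the dimension count and the explicit basis are already supplied by part (i) of \th\ref{theorem 4.3}.

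First, I would invoke \th\ref{theorem 2.12}. Since $\FF$ is assumed to be an algebraic extension of a prime field and $\FF T_{4n}$ is naturally an $\FF$-algebra, the theorem applies: for every $d \in \der(\FF T_{4n})$ one has $d(\FF) = \{0\}$ and $d$ is $\FF$-linear. By \th\ref{definition 2.4}, this is precisely the statement that $d \in \der_{\FF}(\FF T_{4n})$. Hence $\der(\FF T_{4n}) \subseteq \der_{\FF}(\FF T_{4n})$; the reverse inclusion is obvious since any $\FF$-derivation is in particular a derivation. Therefore $\der(\FF T_{4n}) = \der_{\FF}(\FF T_{4n})$ as $\FF$-vector spaces.

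Next, I would observe that the hypotheses here match exactly those of \th\ref{theorem 4.3}(i): either $\ch(\FF) = 0$, or $\ch(\FF) = p$ is an odd rational prime with $\gcd(n,p) = 1$. That theorem then tells us that $\der_{\FF}(\FF T_{4n})$ has dimension $3(n-1)$ over $\FF$ with the explicit basis $\mathcal{B}$ exhibited there. Combining this with the identification from the previous paragraph yields the claim.

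There is essentially no obstacle here; the work has already been done in \th\ref{theorem 2.12} and \th\ref{theorem 4.3}(i). The only point that requires a moment's care is verifying that \th\ref{theorem 2.12} applies to $\FF T_{4n}$, but this is immediate since $\FF T_{4n}$ is a $\FF$-algebra in the standard sense (with scalar action given by multiplication in the coefficient ring), which is exactly the setting of \th\ref{theorem 2.12}.
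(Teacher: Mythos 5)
Your proposal is correct and follows exactly the paper's route: apply \th\ref{theorem 2.12} to conclude that every derivation of $\FF T_{4n}$ is an $\FF $-derivation, so $\der(\FF T_{4n}) = \der_{\FF }(\FF T_{4n})$, and then read off the dimension and basis from \th\ref{theorem 4.3}(i). The paper gives this as a one-line deduction; you have merely written out the same argument in full.
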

\begin{proof}
Follows from \th\ref{theorem 2.12} and \th\ref{theorem 4.3} (i).
\end{proof}

\begin{theorem}\th\label{theorem 4.5}
Let $\FF $ be a field. Then $\der_{\inn}(\FF T_{4n})$ has dimension $3 (n-1)$ over $\FF $ and a basis is $\mathcal{B}_{0} = \{d_{g} \mid g \in \{a^{i}, ~ a^{2i}b, ~ a^{2i+1}b \mid 1 \leq i \leq n-1\}\}$.
\end{theorem}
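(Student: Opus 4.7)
The plan is to derive this as an immediate consequence of \th\ref{theorem 2.15} applied to $G = T_{4n}$, with the conjugacy class data supplied by \th\ref{lemma 4.1}. By \th\ref{lemma 4.1}, $T_{4n}$ has exactly $r = n+3$ conjugacy classes: the two singletons $\{1\}$ and $\{a^{n}\}$, the $n-1$ size-$2$ classes $\{a^{i}, a^{-i}\}$ for $1 \leq i \leq n-1$, and the two size-$n$ classes $\{a^{2i}b \mid 0 \leq i \leq n-1\}$ and $\{a^{2i+1}b \mid 0 \leq i \leq n-1\}$. Since $|T_{4n}| = 4n$, \th\ref{theorem 2.15} immediately gives
\[
\dim_{\FF } \der_{\inn}(\FF T_{4n}) \;=\; 4n - (n+3) \;=\; 3(n-1),
\]
which settles the dimension count.

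For the explicit basis, I would simply choose class representatives for the non-singleton classes so that the complementary set (that is, each class with its representative removed) matches $\mathcal{B}_{0}$ as stated. Concretely, for each class $\{a^{i}, a^{-i}\}$ with $1 \leq i \leq n-1$, take $a^{-i}$ as the representative, so the leftover element is $a^{i}$; for the class $\{a^{2i}b \mid 0 \leq i \leq n-1\}$ take $b$ as the representative, leaving $a^{2i}b$ for $1 \leq i \leq n-1$; for $\{a^{2i+1}b \mid 0 \leq i \leq n-1\}$ take $ab$ as the representative, leaving $a^{2i+1}b$ for $1 \leq i \leq n-1$. With $s = 2$ (the number of singleton classes), the set $\bigcup_{i=s+1}^{r}(C_{i}\setminus\{x_{i}\})$ of \th\ref{theorem 2.15} is then exactly $\{a^{i},\, a^{2i}b,\, a^{2i+1}b \mid 1 \leq i \leq n-1\}$, so $\mathcal{B}_{0}$ is the basis produced by the theorem.

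There is essentially no obstacle here: the only thing to be careful about is bookkeeping of conjugacy class representatives so that the leftover elements, taken over all non-central classes, reproduce the set displayed in $\mathcal{B}_{0}$. Verifying the cardinality $(n-1) + (n-1) + (n-1) = 3(n-1)$ provides a consistency check that matches the dimension already computed.
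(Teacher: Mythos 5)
Your proposal is correct and is exactly the paper's argument: the paper's proof of this theorem is a one-line appeal to \th\ref{theorem 2.15} together with the conjugacy class data of \th\ref{lemma 4.1}, and your write-up simply makes the dimension count $4n-(n+3)=3(n-1)$ and the choice of class representatives explicit. No issues.
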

\begin{proof}
Follows from \th\ref{theorem 2.15} and \th\ref{lemma 4.1}.
\end{proof}

\begin{corollary}\th\label{corollary 4.6}
Let $\FF $ be an algebraic extension of a prime field and $p$ be an odd rational prime.
\begin{enumerate}
\item[(i)] If $\ch(\FF )$ is either $0$ or $p$ with $\gcd(n,p)=1$, then all derivations of $\FF T_{4n}$ are inner, that is,  $\der(\FF T_{4n}) = \der_{\inn}(\FF T_{4n})$. In other words, $\FF T_{4n}$ has no non-zero outer derivations.

\item[(ii)] If $\ch(\FF ) = p$ with $\gcd(n,p) \neq 1$, then $\der_{\inn}(\FF T_{4n}) \subsetneq \der(\FF T_{4n})$. In other words, $\FF T_{4n}$ has non-zero outer derivations.
\end{enumerate}
\end{corollary}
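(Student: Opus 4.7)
The plan is to reduce both parts of Corollary \th\ref{corollary 4.6} to a dimension count, using the classifications already established in \th\ref{theorem 4.3}, \th\ref{corollary 4.4}, and \th\ref{theorem 4.5}, together with the general principle that $\der_{\inn}(\FF T_{4n}) \subseteq \der(\FF T_{4n})$ is an $\FF$-subspace.

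For part (i), the hypothesis that $\FF$ is an algebraic extension of a prime field with $\ch(\FF) = 0$ or $\ch(\FF) = p$ with $\gcd(n,p)=1$ puts us precisely in the setting of \th\ref{corollary 4.4}, so $\dim_\FF \der(\FF T_{4n}) = 3(n-1)$. On the other side, \th\ref{theorem 4.5} gives $\dim_\FF \der_{\inn}(\FF T_{4n}) = 3(n-1)$ unconditionally on $\ch(\FF)$. Since the inner derivations always form an $\FF$-subspace of $\der(\FF T_{4n})$, the equality of dimensions forces $\der_{\inn}(\FF T_{4n}) = \der(\FF T_{4n})$, establishing the absence of nonzero outer derivations.

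For part (ii), the assumption $\ch(\FF) = p$ with $\gcd(n,p) \neq 1$ means $p \mid n$, which is the regime of \th\ref{theorem 4.3}(ii), giving $\dim_\FF \der_\FF(\FF T_{4n}) = 4(n-1)$. Because $\FF$ is an algebraic extension of a prime field, \th\ref{theorem 2.12} tells us every derivation of $\FF T_{4n}$ is $\FF$-linear and vanishes on $\FF$, so $\der(\FF T_{4n}) = \der_\FF(\FF T_{4n})$ and therefore has dimension $4(n-1)$. Combined with $\dim_\FF \der_{\inn}(\FF T_{4n}) = 3(n-1)$ from \th\ref{theorem 4.5}, and $n \geq 2$ (so that $n-1 \geq 1$), this yields the strict inclusion $\der_{\inn}(\FF T_{4n}) \subsetneq \der(\FF T_{4n})$, i.e.\ nonzero outer derivations exist.

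No step here is a real obstacle since all computational work has been done in \th\ref{theorem 4.3} and \th\ref{theorem 4.5}; the only subtlety to state carefully is the use of \th\ref{theorem 2.12} to promote the $\FF$-derivation count to the full derivation count in part (ii), which is what lets the dimension comparison go through. The proof therefore reduces to citing the relevant results and comparing the two integers $3(n-1)$ and $4(n-1)$.
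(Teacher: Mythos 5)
Your proposal is correct and matches the paper's intended argument exactly: the paper's proof is simply ``Follows from \th\ref{theorem 4.3} (ii), \th\ref{corollary 4.4} and \th\ref{theorem 4.5},'' and you have filled in precisely the dimension comparison (with \th\ref{theorem 2.12} promoting $\der_{\FF}$ to $\der$) that this citation implies. The only cosmetic remark is that in part (ii) the bound $n-1\geq 1$ follows even more directly from $p\mid n$ with $p\geq 3$, so $n\geq 3$.
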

\begin{proof}
Follows from \th\ref{theorem 4.3} (ii), \th\ref{corollary 4.4} and \th\ref{theorem 4.5}.
\end{proof}

\section{Derivations of Semi-Dihedral Group Algebras}\label{section 5}
The group $SD_{8n}$ has the presentation $$\langle a, b \mid a^{4n} = b^{2} = 1, bab = a^{2n-1}\rangle.$$ So $SD_{8n} = \{a^{i}b^{j} \mid 0 \leq i \leq 4n-1, 0 \leq j \leq 1\}$. In this section, we classify all derivations of the group algebra $\FF (SD_{8n})$, where $\FF $ is a field of characteristic $0$ or an odd rational prime $p$.

\begin{lemma}[{\cite[Proposition 2.5]{Salahshour2020}}]\th\label{lemma 5.1}\begin{enumerate}
\item[(i)] When $n$ is even, $SD_{8n}$ has $2n+3$ conjugacy \\ classes given by $$\{1\}, ~~ \{a^{2n}\}, ~~ \{a^{2k}, a^{-2k}\} ~ \text{for} ~ 1 \leq k \leq n-1,$$ $$\{a^{2k+1}, a^{2n-(2k+1)}\} ~ \text{for} ~ \frac{-n}{2} \leq k \leq \frac{n}{2} - 1,$$ $$b^{SD_{8n}} = \{a^{2k}b \mid 1 \leq k \leq 2n\}, ~~ (ab)^{SD_{8n}} = \{a^{2k-1}b \mid 1 \leq k \leq 2n\}.$$

\item[(ii)] When $n$ is odd, $SD_{8n}$ has $2n+6$ conjugacy classes given by $$\{1\}, ~~ \{a^{n}\}, ~~ \{a^{2n}\}, ~~ \{a^{3n}\}, ~~ \{a^{2k}, a^{-2k}\} ~ \text{for} ~ 1 \leq k \leq n-1,$$ $$\{a^{2k+1}, a^{2n-(2k+1)}\} ~ \text{for} ~ - \frac{n-1}{2} \leq k \leq \frac{n-1}{2} - 1,$$ $$b^{SD_{8n}} = \{a^{4k}b \mid 1 \leq k \leq n\}, ~~ (ab)^{SD_{8n}} = \{a^{4k+1}b \mid 1 \leq k \leq n\},$$ $$(a^{2}b)^{SD_{8n}} = \{a^{4k+2}b \mid 1 \leq k \leq n\}, ~~ (a^{3}b)^{SD_{8n}} = \{a^{4k+3}b \mid 1 \leq k \leq n\}.$$
\end{enumerate}
\end{lemma}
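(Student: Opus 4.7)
The plan is to analyze conjugation in $SD_{8n}$ directly from the presentation, treating the rotations $a^{k}$ and the reflections $a^{k}b$ separately and splitting according to the parity of $n$. The key commutation rules extracted from $bab = a^{2n-1}$ (together with $b^{2} = 1$) are $ba^{k}b^{-1} = a^{k(2n-1)}$, equivalently $ba^{k} = a^{k(2n-1)}b$, and, as a consequence,
\[
a \cdot (a^{k}b) \cdot a^{-1} \;=\; a^{k+1}(ba^{-1}) \;=\; a^{k+2-2n}b \;\equiv\; a^{k+2n+2}b \pmod{4n}.
\]
I would also record that $a^{2n}$ is central, since $2n(2n-1) \equiv -2n \equiv 2n \pmod{4n}$.

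For the rotations, conjugation by powers of $a$ is trivial and conjugation by $b$ acts as the involution $k \mapsto k(2n-1)$ on $\mathbb{Z}/4n$. Observing that $2nk \equiv 0 \pmod{4n}$ for even $k$ and $2nk \equiv 2n \pmod{4n}$ for odd $k$, this involution is $k \mapsto -k$ on even exponents and $k \mapsto 2n-k$ on odd exponents. The fixed even exponents are $0$ and $2n$, giving the singletons $\{1\}$ and $\{a^{2n}\}$, while the remaining even exponents pair as $\{a^{2k}, a^{-2k}\}$ for $1 \le k \le n-1$. For odd $k$, the fixed-point equation $2k \equiv 2n \pmod{4n}$ has no odd solutions when $n$ is even, so all odd exponents pair up; when $n$ is odd it yields $k = n, 3n$, producing the extra singletons $\{a^{n}\}, \{a^{3n}\}$. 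The remaining odd exponents form pairs $\{a^{2k+1}, a^{2n-(2k+1)}\}$ indexed over the ranges stated in (i) and (ii).

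For the reflections, the orbit of $a^{i}b$ under $\langle a \rangle$-conjugation is $\{a^{i + j(2n+2)}b : j \in \mathbb{Z}\}$, whose size is $4n/\gcd(2n+2, 4n)$. A short computation gives $\gcd(2n+2, 4n) = \gcd(2n+2, 4)$, which equals $2$ when $n$ is even and $4$ when $n$ is odd. Thus for $n$ even the $\langle a \rangle$-orbits are the two parity cosets of $\mathbb{Z}/4n$, each of size $2n$, and $b$-conjugation $i \mapsto i(2n-1)$ preserves parity, so no merging occurs; this yields the two classes $b^{SD_{8n}}$ and $(ab)^{SD_{8n}}$. For $n$ odd the $\langle a \rangle$-orbits are the four residue classes modulo $4$, each of size $n$, and since $2n-1 \equiv 1 \pmod{4}$ the $b$-conjugation preserves each residue class, giving the four reflection classes listed in (ii). A final tally yields $2 + (n-1) + n + 2 = 2n+3$ classes when $n$ is even and $4 + (n-1) + (n-1) + 4 = 2n+6$ when $n$ is odd, with class sizes summing to $8n$ in each case. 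The main obstacle will be the careful modular bookkeeping on the reflections — in particular, correctly computing $\gcd(2n+2, 4n)$ in each parity and confirming that $b$-conjugation genuinely preserves each $\langle a \rangle$-orbit rather than merging them.
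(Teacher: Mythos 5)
Your argument is correct and complete, but it is worth noting that the paper does not prove this lemma at all: it is quoted verbatim from Salahshour--Ashrafi, \emph{Commuting conjugacy class graph of finite CA-groups}, Proposition 2.5, so there is no in-paper proof to compare against. Your direct verification is sound: the relation $bab=a^{2n-1}$ with $b^2=1$ does give $ba^kb^{-1}=a^{k(2n-1)}$, and since $(2n-1)^2\equiv 1\pmod{4n}$ this is an involution acting as $k\mapsto -k$ on even exponents and $k\mapsto 2n-k$ on odd ones; your fixed-point analysis correctly isolates $\{1\},\{a^{2n}\}$ always and $\{a^n\},\{a^{3n}\}$ exactly when $n$ is odd. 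On reflections, the identity $4n=2(2n+2)-4$ justifies $\gcd(2n+2,4n)=\gcd(2n+2,4)$, so the $\langle a\rangle$-orbits are the parity cosets ($n$ even) or the residue classes mod $4$ ($n$ odd), and since $2n-1$ is odd and, for $n$ odd, $2n-1\equiv 1\pmod 4$, conjugation by $b$ preserves these orbits and no merging occurs; conjugation by the generators $a$ and $b$ suffices to generate the full conjugation action, so the class list is complete. The final counts $2n+3$ and $2n+6$ and the size check $\sum|C_i|=8n$ both come out right. What your self-contained computation buys over the paper's bare citation is a verifiable derivation from the presentation itself, which is in the same elementary spirit as the paper's own Lemma 3.1 (also cited rather than proved); the only cost is the modular bookkeeping you already flag, all of which checks out.
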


\begin{lemma}\th\label{lemma 5.2} 
Let $\FF $ be a field of characteristic $0$ or $p$, where $p$ is an odd rational prime. Then the following statements hold.
\begin{enumerate}
\item[(i)] The set 
\begin{equation*}
\begin{aligned}
\mathcal{\bar{B}}(b) & = \{(a^{2k} - a^{-2k}), ~ (a^{2k} - a^{-2k})b \mid 1 \leq k \leq n-1\} \\ &\quad \cup \{(a^{2k+1} - a^{2n-(2k+1)}), ~ (a^{2k+1} - a^{2n-(2k+1)})b \mid - \lfloor \frac{n}{2} \rfloor \leq k \leq \lfloor \frac{n}{2} \rfloor - 1\}
\end{aligned}
\end{equation*}
is a basis of $\bar{C}(b)$ over $\FF $.

\item[(ii)] The set 
\begin{equation*}
\begin{aligned}
\mathcal{\bar{B}}(ab) & = \{(a^{2k} - a^{-2k}), ~ a(a^{2k} - a^{-2k})b \mid 1 \leq k \leq n-1\} \\ &\quad \cup \{(a^{2k+1} - a^{2n-(2k+1)}), ~ a(a^{2k+1} - a^{2n-(2k+1)})b \mid - \lfloor \frac{n}{2} \rfloor \leq k \leq \lfloor \frac{n}{2} \rfloor - 1\}
\end{aligned}
\end{equation*}
is a basis of $\bar{C}(ab)$ over $\FF $.
\end{enumerate}
\end{lemma}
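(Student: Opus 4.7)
My plan is to follow the template of \th\ref{lemma 3.2} for part (i), and to circumvent the failure of the analogous automorphism argument for part (ii) via a direct computation using the specific semi-dihedral relation.

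For (i), I would decompose an arbitrary $\alpha \in \FF (SD_{8n})$ as $\alpha = \beta + \gamma b$ with $\beta, \gamma \in \FF \langle a\rangle$, and introduce the notation $\tilde\mu := b\mu b^{-1}$ for $\mu \in \FF \langle a\rangle$ (which again lies in $\FF \langle a\rangle$ since $b\langle a\rangle b^{-1} = \langle a\rangle$). A short bookkeeping computation yields
\[
\alpha b + b\alpha = (\gamma + \tilde\gamma) + (\beta + \tilde\beta)b,
\]
so $\alpha \in \bar{C}(b)$ iff $\beta + \tilde\beta = 0$ and $\gamma + \tilde\gamma = 0$. From $bab = a^{2n-1}$ one reads off $b a^i b^{-1} = a^{\sigma(i)}$, where $\sigma(i) = -i$ for $i$ even and $\sigma(i) = 2n-i$ for $i$ odd (modulo $4n$). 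Writing $\mu = \sum_i \mu_i a^i$, the condition $\mu + \tilde\mu = 0$ becomes $\mu_{\sigma(i)} = -\mu_i$ for all $i$. The map $\sigma$ is an involution, so its orbits have size $1$ or $2$; at each fixed point the condition forces $2\mu_i = 0$, hence $\mu_i = 0$ by the hypothesis on $\ch(\FF )$. The fixed even indices are $i = 0$ and $i = 2n$; the fixed odd indices are $i = n$ and $i = 3n$ when $n$ is odd, and there are none when $n$ is even. Each non-fixed orbit $\{i,\sigma(i)\}$ contributes one basis vector $a^i - a^{\sigma(i)}$; parametrizing by $2k$ with $k \in \{1,\dots,n-1\}$ (even orbits) and by $2k+1$ with $k \in \{-\lfloor n/2\rfloor,\dots,\lfloor n/2\rfloor - 1\}$ (odd orbits) selects each non-fixed orbit exactly once. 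Combining the resulting bases for the $\beta$- and $\gamma$-parts produces $\mathcal{\bar{B}}(b)$; linear independence is immediate from disjoint group-element supports.

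For (ii), the $\theta$-trick used in \th\ref{lemma 3.2} and \th\ref{lemma 4.2} fails here, because $(ab)^2 = a^{2n} \neq 1$ in $SD_{8n}$, so no group automorphism can send $b$ to $ab$. I would therefore compute $\bar{C}(ab)$ directly. With $\alpha = \beta + \gamma b$ as above, using $ba = a^{2n-1}b$ and $b^2 = 1$, one obtains
\[
\alpha(ab) + (ab)\alpha = (\gamma a^{2n-1} + a\tilde\gamma) + (\beta a + a\tilde\beta)b.
\]
Since $\beta,\gamma \in \FF \langle a\rangle$ commute with powers of $a$, the $b$-component collapses (after cancelling $a$) to the same condition $\beta + \tilde\beta = 0$, giving the same $\beta$-basis as in (i). The constant component becomes $a^{2n-1}\gamma + a\tilde\gamma = 0$; the reparametrization $\gamma = a\delta$, together with the identity $\widetilde{a\delta} = (bab^{-1})\tilde\delta = a^{2n-1}\tilde\delta$, reduces this to $\delta + \tilde\delta = 0$. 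Hence $\gamma b$ ranges over $\{a\delta\, b \mid \delta + \tilde\delta = 0\}$, and the corresponding basis is obtained by left-multiplying each basis vector of the $\delta$-space by $a$, yielding precisely $\{a(a^i - a^{\sigma(i)})b\}$, as in $\mathcal{\bar{B}}(ab)$.

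The main obstacle will be the bookkeeping in (i): correctly identifying the fixed points and orbits of $\sigma$, in particular the parity split for odd indices (fixed odd points appear only when $n$ is odd), and checking that the stated ranges of $k$ give exactly one representative per non-fixed orbit in all cases. In (ii), the essential non-routine step is the identity $\widetilde{a\delta} = a^{2n-1}\tilde\delta$, which is where the semi-dihedral relation enters; everything else parallels (i).
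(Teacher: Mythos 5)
Your proof is correct, and for part (ii) it takes a genuinely different route from the paper --- one that is arguably necessary. For part (i) both you and the paper do the same direct coefficient computation (the paper writes out $\lambda_{i,j} = -\lambda_{(4n-i),j}$ for $i$ even and $\lambda_{i,j} = -\lambda_{(2n+(4n-i)),j}$ for $i$ odd, which is exactly your involution $\sigma$ in different notation), so that half is essentially identical. For part (ii), however, the paper asserts that $\theta(a^i b^j) = a^i(ab)^j$ is an $\FF$-algebra automorphism of $\FF(SD_{8n})$ and defers to the argument of \th\ref{lemma 3.2}; as you observe, this map cannot be multiplicative because $\theta(b)^2 = (ab)^2 = a^{2n} \neq 1 = \theta(b^2)$ (indeed every automorphism of $SD_{8n}$ sends $b$ to $a^s b$ with $s$ even), so the paper's proof of (ii) has a genuine gap that your direct computation repairs. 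Your key steps check out: the $b$-component of $\alpha(ab)+(ab)\alpha$ reduces to $a(\beta+\tilde\beta)b$, the constant component to $a^{2n-1}\gamma + a\tilde\gamma$, and the substitution $\gamma = a\delta$ together with $\widetilde{a\delta} = a^{2n-1}\tilde\delta$ turns the latter into $a^{2n}(\delta+\tilde\delta)=0$, so the $\gamma$-part is exactly $a\cdot(\text{the }\delta\text{-space from (i)})\cdot b$, which matches $\mathcal{\bar{B}}(ab)$. What your approach buys is a proof of (ii) that does not rest on a nonexistent automorphism; what the paper's approach would buy, were the automorphism available (as it is for $D_{2n}$ and $T_{4n}$), is the usual economy of transporting a known basis. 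Your orbit bookkeeping in (i) (fixed points $0, 2n$ always, and $n, 3n$ only for $n$ odd, killed by $\ch(\FF)\neq 2$) is also correct and matches the stated index ranges.
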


\begin{proof}
(i) $\mathcal{\bar{B}}(b)$ is an $\FF $-linearly independent subset of $\bar{C}(b)$. Now let $\alpha \in \bar{C}(b)$ and $\alpha = \sum_{\substack{1 \leq i \leq 4n \\ 0 \leq j \leq 1}} \lambda_{i,j} a^{i}b^{j}$ for some $\lambda_{i,j} \in \FF $ ($1 \leq i \leq 4n, 0 \leq j \leq 1$). Put $\lambda_{(2n),j} = \lambda_{0,j}$ for $j \in \{0, 1\}$. Since $\alpha b = \sum_{0 \leq i \leq 4n-1} \lambda_{i,0} a^{i}b + \sum_{1 \leq i \leq 4n} \lambda_{i,1} a^{i}$ and $b \alpha = (\sum_{\substack{0 \leq k \leq 4n-1 \\ \text{$k$ even}}} \lambda_{(4n-k),0} a^{k}b + \sum_{\substack{0 \leq k \leq 4n-1 \\ \text{$k$ odd}}} \lambda_{(2n+(4n-k)),0} a^{k}b) + (\sum_{\substack{0 \leq k \leq 4n-1 \\ \text{$k$ even}}} \lambda_{(4n-k),1} a^{k} + \sum_{\substack{0 \leq k \leq 4n-1 \\ \text{$k$ odd}}} \lambda_{(2n+(4n-k)),1} a^{k})$, so $\alpha b = -b \alpha$ implies that for all $i \in \{0, 1, ..., 4n-1\}$, $j \in \{0,1\}$, $\lambda_{i,j} = \begin{cases} 
- \lambda_{(4n-i),j} & \text{if $i$ is even} \\
- \lambda_{(2n+(4n-i)),j} & \text{if $i$ is odd}
\end{cases}.$ Therefore, $\alpha = \sum_{\substack{1 \leq k \leq n-1 \\ 0 \leq j \leq 1}} \lambda_{k,j}(a^{2k} - a^{-2k})b^{j} + \sum_{\substack{- \lfloor \frac{n}{2} \rfloor \leq k \leq \lfloor \frac{n}{2} \rfloor - 1 \\ 0 \leq j \leq 1}} \lambda_{k,j}(a^{2k+1} - a^{2n-(2k+1)})b^{j}$. Hence, $\mathcal{\bar{B}}(b)$ is an $\FF $-basis of $\bar{C}(b)$.

(ii) The map $\theta:SD_{8n} \rightarrow SD_{8n}$ defined by $\theta(a^{i}b^{j}) = a^{i}(ab)^{j}, \hspace{0.1cm} \forall \hspace{0.1cm} 0 \leq i \leq 4n-1 \hspace{0.1cm} \text{and} \hspace{0.1cm} 0 \leq j \leq 1$ is an $\FF $-algebra automorphism of $SD_{8n}$. Now the proof follows easily on the similar lines of the proof of \th\ref{lemma 3.2}.
\end{proof}

\begin{theorem}\th\label{theorem 5.3}
Let $\FF $ be a field and $p$ be an odd rational prime. Then the following statements hold.
\begin{enumerate}
\item[(i)] If $\ch(\FF ) = 0$ or $p$ with $\gcd(n,p)=1$, then the dimension of $\der_{\FF }(\FF (SD_{8n}))$ over $\FF $ is $3(2n-1)$ if $n$ is even and $6(n-1)$ if $n$ is odd, and a basis 
\begin{equation*}
\begin{aligned}
\mathcal{B} & = \{d_{(\bar{a}, \bar{b})} \mid (\bar{a}, \bar{b}) \in \{(a^{2s} - a^{-2s})b,0), ~~ ((a^{2t+1} - a^{2n-(2t+1)})b,0), \\ &\quad (a(a^{2s} - a^{-2s})b, (a^{2s}-a^{-2s})), ~~ (a(a^{2t+1} - a^{2n-(2t+1)})b, (a^{2t+1} - a^{2n-(2t+1)})b), \\ &\quad (0, (a^{2s} - a^{-2s})b), ~~ (0, (a^{2t+1} - a^{2n-(2t+1)})b) \mid 1 \leq s \leq n-1, \\ &\quad -\lfloor \frac{n}{2} \rfloor \leq t \leq \lfloor \frac{n}{2} \rfloor -1\}\}.
\end{aligned}
\end{equation*}

\item[(ii)] If $\ch(\FF ) = p$ with $\gcd(n,p) \neq 1$, then the dimension of $\der_{\FF }(\FF (SD_{8n}))$ over $\FF $ is $4(2n-1)$ if $n$ is even and $8(n-1)$ if $n$ is odd, and a basis 
\begin{equation*}
\begin{aligned}
\mathcal{B}' & = \{d_{(\bar{a}, \bar{b})} \mid (\bar{a}, \bar{b}) \in \{(a(a^{2s} - a^{-2s}),0), ~~ (a(a^{2t+1} - a^{2n-(2t+1)}),0), \\ &\quad (a(a^{2s} - a^{-2s}), (a^{2s} - a^{-2s})b), ~~ (a(a^{2t+1} - a^{2n-(2t+1)}), (a^{2t+1} - a^{2n-(2t+1)})b), \\ &\quad ((a^{2s} - a^{-2s})b,0), ~~ ((a^{2t+1} - a^{2n-(2t+1)})b,0), ~~ (a(a^{2s} - a^{-2s})b, (a^{2s} - a^{-2s})), \\ &\quad (a(a^{2t+1} - a^{2n-(2t+1)})b, (a^{2t+1} - a^{2n-(2t+1)})) \mid 1 \leq s \leq n-1, \\ &\quad -\lfloor \frac{n}{2} \rfloor \leq t \leq \lfloor \frac{n}{2} \rfloor -1\}\}.
\end{aligned}
\end{equation*}
\end{enumerate} 
\end{theorem}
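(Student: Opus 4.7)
The plan mirrors the proofs of Theorems \th\ref{theorem 3.3} and \th\ref{theorem 4.3}. By Theorem \th\ref{theorem 2.11}, a map $f:X=\{a,b\}\to \FF(SD_{8n})$ extends to an $\FF$-derivation iff $\tilde f$ vanishes on each relator. Since $(ab)^{2}=a\cdot bab=a^{2n}$ is a consequence of $bab=a^{2n-1}$, one may equivalently impose $\tilde f(a^{4n})=0$, $\tilde f(b^{2})=0$, and $\tilde f((ab)^{2}a^{-2n})=0$. The first, using $\tilde f(a^{4n})=2a^{2n}\tilde f(a^{2n})$ and that $2$ is invertible in $\FF$, reduces to $\tilde f(a^{2n})=0$. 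The second gives $f(b)\in\bar C(b)$. The third, combined with the first, reduces to $\tilde f(ab)\in\bar C(ab)$.

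Writing $f(a)=\alpha+\beta b$ with $\alpha,\beta\in\FF\langle a\rangle$, I would apply \th\ref{lemma 5.2}: part (i) expands $f(b)$ against $\mathcal{\bar B}(b)$ with scalar families $\mu_{k,j},\nu_{k,j}\in\FF$, and part (ii) expands $\tilde f(ab)$ against $\mathcal{\bar B}(ab)$ with scalar families $\delta_{k,j},\gamma_{k,j}\in\FF$. The identity $\tilde f(ab)=f(a)b+af(b)=\beta+\alpha b+af(b)$ then determines $\alpha$ and $\beta$ by matching the $\FF\langle a\rangle$-component with the $b$-component in the expansions. The requirement that $\alpha,\beta$ actually lie in $\FF\langle a\rangle$ forces certain pairs of coefficients to coincide (directly analogous to $\gamma_i=\nu_i$ in the proof of \th\ref{theorem 3.3}), thereby eliminating one scalar family.

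The final step is to impose $\tilde f(a^{2n})=0$. Expanding via (\ref{eq 2.3}) and using $b a^{m}=a^{(2n-1)m}b$ gives
\[
\tilde f(a^{2n})=2n\,\alpha\, a^{2n-1}+\sum_{j=0}^{2n-1} a^{\,j+(2n-1)(2n-j-1)}\beta\,b,
\]
and $(2n-1)^{2}\equiv 1\pmod{4n}$ simplifies the exponent to $1-2(n-1)j\pmod{4n}$. In case (i) the factor $2n$ is a unit in $\FF$, forcing $\alpha=0$; the remaining sum constrains $\beta$ to a certain $\FF$-submodule of $\FF\langle a\rangle$, and matching this with the expression obtained from the previous step produces the claimed basis $\mathcal B$, of size $3(2n-1)$ if $n$ is even and $6(n-1)$ if $n$ is odd. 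In case (ii), $2n\equiv 0$ in $\FF$, so the first summand vanishes and $\alpha$ becomes free subject only to the constraint produced by the previous step; this doubles the number of $\alpha$-parameters and yields $\mathcal B'$ of size $4(2n-1)$ or $8(n-1)$.

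The main technical obstacle will be the parity-sensitive analysis of the sum condition on $\beta$, since the subgroup generated by $a^{2(n-1)}$ inside $\langle a\rangle\cong\mathbb Z/4n\mathbb Z$ depends on $\gcd(2(n-1),4n)$, which equals $2$ when $n$ is even and $4$ when $n$ is odd; this is exactly what bifurcates each of cases (i) and (ii) into the two subcases appearing in the statement. Once the surviving free parameters are isolated, the $\FF$-linear independence of $\mathcal B$ and $\mathcal B'$ follows, as in the earlier theorems, from the $\FF$-linear independence of the supporting group elements in $\FF(SD_{8n})$ together with the fact that an $\FF$-derivation of $\FF(SD_{8n})$ is uniquely determined by its values on $a$ and $b$.
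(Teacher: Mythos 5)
Your proposal is correct and follows essentially the same route as the paper's proof: reduce via \th\ref{theorem 2.11} to $\tilde f(a^{2n})=0$, $f(b)\in\bar C(b)$ and $\tilde f(ab)\in\bar C(ab)$, expand against the bases of \th\ref{lemma 5.2}, and solve $\tilde f(ab)=f(a)b+af(b)$ for $\alpha$ and $\beta$, with the even/odd bifurcation coming from $\gcd(2(n-1),4n)$ exactly as you describe. The only (cosmetic) difference is your use of the relator $(ab)^{2}a^{-2n}$ in place of the paper's $a^{2n+1}bab$, which slightly streamlines the verification that the third condition reduces to $\tilde f(ab)\in\bar C(ab)$.
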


\begin{proof}
Let $f:X = \{a, b\} \rightarrow \FF (SD_{8n})$ be a map that can be extended to an $\FF $-derivation of $\FF (SD_{8n})$. By \th\ref{theorem 2.11}, this holds if and only if $\tilde{f}(a^{4n}) = 0, \hspace{0.2cm} \tilde{f}(b^{2}) = 0, \hspace{0.1cm} \text{and} \hspace{0.1cm} \tilde{f}(a^{2n+1}bab) \\ = 0$. Since $\tilde{f}(a^{4n}) = 2a^{2n} \tilde{f}(a^{2n})$, so $\tilde{f}(a^{4n}) = 0 \Leftrightarrow \tilde{f}(a^{2n}) = 0$. Write $f(a)$ as $f(a) = \alpha + \beta b$ for some $\alpha, \beta \in \FF  \langle a \rangle$. So \begin{equation}\label{eq 5.3} \tilde{f}(a^{2n}) = 2n \alpha a^{2n-1} + \beta \left(\sum_{\substack{1 \leq i \leq 2n \\ \text{$i$ even}}} a^{2n+2i-1}b + \sum_{\substack{1 \leq i \leq 2n \\ \text{$i$ odd}}} a^{2i-1}b \right). \end{equation}
Let $\beta = \sum_{i=1}^{4n} \lambda_{i} a^{i}$ for some $\lambda_{i} \in \FF $ ($1 \leq i \leq 4n$) with $\lambda_{0} = \lambda_{4n}$. Also, $\tilde{f}(b^{2}) = f(b)b + bf(b)$ and \begin{equation}\label{eq 5.4}
\tilde{f}(a^{2n+1}bab) = \tilde{f}(a^{2n})abab + a^{2n}f(a)bab + a^{2n+1}f(b)ab + a^{2n+1}bf(a)b + a^{2n+1}baf(b).
\end{equation}

Again, there are two possibilities (i) and (ii).\vspace{10pt}

\textbf{(i)} $\ch(\FF ) = 0$ or $p$ with $\gcd(n,p)=1$: There are two possible cases: $n$ even and $n$ odd.

\textbf{Case 1:} $n$ is even. Then $n = 2m$ for some $m \in \mathbb{N}$. So from (\ref{eq 5.3}), we get that \begin{eqnarray*}\tilde{f}(a^{2n}) & = & 2n \alpha a^{2n-1} + \beta \left(\sum_{k=1}^{n} a^{4(m+k)-1}b + \sum_{k=1}^{n} a^{4k-3}b \right) = 2n \alpha a^{2n-1} + \beta \left(\sum_{\substack{1 \leq i \leq 4n \\ \text{$i$ odd}}} a^{i}b \right) .
\end{eqnarray*}
So $\tilde{f}(a^{2n}) = 0$ if and only if $\alpha = 0$ and $\beta \left(\sum_{\substack{1 \leq i \leq 4n \\ \text{$i$ odd}}} a^{i}b \right) = 0$. Now

\begin{eqnarray*}
0 & = & \beta \left(\sum_{\substack{1 \leq j \leq 4n \\ \text{$j$ odd}}} a^{j}b \right) = \sum_{\substack{1 \leq k \leq 4n \\ \text{$k$ odd}}} \left( \sum_{\substack{1 \leq i \leq 4n \\ \text{$i$ even}}}\lambda_{i}\right) a^{k}b + \sum_{\substack{1 \leq k \leq 4n \\ \text{$k$ even}}} \left( \sum_{\substack{1 \leq i \leq 4n \\ \text{$i$ odd}}}\lambda_{i}\right) a^{k}b.
\end{eqnarray*}
So $\sum_{\substack{1 \leq i \leq 4n \\ \text{$i$ even}}}\lambda_{i} = 0$ and $\sum_{\substack{1 \leq i \leq 4n \\ \text{$i$ odd}}}\lambda_{i} = 0$ so that $\beta \in \Delta'(\langle a^{2} \rangle)$. Therefore, $\tilde{f}(a) = \beta b$, where $\beta \in \Delta'(\langle a^{2} \rangle)$.\vspace{6pt}

\textbf{Case 2:} $n$ is odd. Then $n = 2m-1$ for some $m \in \mathbb{N}$. So again from (\ref{eq 5.3}), we get that \begin{eqnarray*}\tilde{f}(a^{2n}) & = & 2n \alpha a^{2n-1} + \beta \left(\sum_{k=1}^{n} a^{4(m+k)-3}b + \sum_{k=1}^{n} a^{4k-3}b \right) = 2n \alpha a^{2n-1} + 2 \beta \left(\sum_{k=1}^{n} a^{4k-3}b \right).
\end{eqnarray*}
So $\tilde{f}(a^{2n}) = 0$ if and only if $\alpha = 0$ and $\beta \left(\sum_{k=1}^{n} a^{4k-3}b \right) = 0$. Any even number is of the form $4k-2$ or $4k-4$ for $k \in \mathbb{Z}$ and any odd number is of the form $4k-1$ or $4k-3$ for $k \in \mathbb{Z}$. So, we have the following:
\begin{equation*}
\begin{aligned}
0  &=  \beta \left(\sum_{k=1}^{n} a^{4k-3}b \right) =  \left(\sum_{\substack{1 \leq i \leq 4n \\ \text{$i$ even}}} \lambda_{i} a^{i}\right)\left(\sum_{k=1}^{n} a^{4k-3}b \right) + \left(\sum_{\substack{1 \leq i \leq 4n \\ \text{$i$ odd}}} \lambda_{i} a^{i}\right)\left(\sum_{k=1}^{n} a^{4k-3}b \right) \\ 
  &=  \sum_{t=1}^{n} \left(\sum_{j=1}^{n} \lambda_{4j} \right)a^{4t-3}b + \sum_{t=1}^{n} \left(\sum_{j=1}^{n} \lambda_{4j-2} \right)a^{4t-1}b + \sum_{t=1}^{n} \left(\sum_{j=1}^{n} \lambda_{4j-3} \right)a^{4t-2}b \\ &\quad +  \sum_{t=1}^{n} \left(\sum_{j=1}^{n} \lambda_{4j-1} \right)a^{4t}b.
\end{aligned}
\end{equation*}
So $\sum_{j=1}^{n} \lambda_{4j} = 0, \hspace{0.1cm} \sum_{j=1}^{n} \lambda_{4j-2} = 0, \hspace{0.1cm} \sum_{j=1}^{n} \lambda_{4j-3} = 0, \hspace{0.1cm} \sum_{j=1}^{n} \lambda_{4j-1} = 0$. Therefore, $f(a) = \beta b$, where $\beta = \sum_{i=1}^{4n} \lambda_{i}a^{i} \in \Delta'(\langle a^{2} \rangle) \cap \Delta'(\langle a^{4} \rangle)$ together with the conditions that $\sum_{j=1}^{n} \lambda_{4j-3} = 0$ and $\sum_{j=1}^{n} \lambda_{4j-1} = 0$. 
Now we come out of the two cases and proceed for general proof.
$\tilde{f}(b) = 0$ if and only if $f(b) \in \bar{C}(b)$. Also, using $\tilde{f}(a^{2n}) = 0$, $\tilde{f}(ab) = \beta + af(b)$ and (\ref{eq 5.4}), we get that $\tilde{f}(a^{2n+1}bab) = 0 \Leftrightarrow \tilde{f}(ab) \in \bar{C}(ab)$. Since $f(b) \in \bar{C}(b)$ and by \th\ref{lemma 5.2} (i), $\mathcal{\bar{B}}(b)$ is an $\FF $-basis of $\bar{C}(b)$, so 

\begin{equation*}
\begin{aligned}
f(b) & = \sum_{s=1}^{n-1} \mu_{s}(a^{2s} - a^{-2s}) + \sum_{s=1}^{n-1} \nu_{s} (a^{2s} - a^{-2s})b + \sum_{t=-\lfloor \frac{n}{2} \rfloor}^{\lfloor \frac{n}{2} \rfloor -1} \delta_{t}(a^{2t+1} - a^{2n-(2t+1)}) \\ &\quad + \sum_{t=- \lfloor \frac{n}{2} \rfloor}^{\lfloor \frac{n}{2} \rfloor -1} \gamma_{t}(a^{2t+1} - a^{2n-(2t+1)})b.
\end{aligned}
\end{equation*} and since $\tilde{f}(ab) \in \bar{C}(ab)$, and by \th\ref{lemma 5.2} (ii), $\mathcal{\bar{B}}(ab)$ is an $\FF $-basis of $\bar{C}(ab)$, so 

\begin{equation*}
\begin{aligned}
\tilde{f}(ab) & = \sum_{s=1}^{n-1} \mu_{s} ' (a^{2s} - a^{-2s}) + \sum_{s=1}^{n-1} \nu_{s} ' a(a^{2s} - a^{-2s})b + \sum_{t=- \lfloor \frac{n}{2} \rfloor}^{\lfloor \frac{n}{2} \rfloor -1} \delta_{t} ' (a^{2t+1} - a^{2n-(2t+1)}) \\ &\quad + \sum_{t=- \lfloor \frac{n}{2} \rfloor}^{\lfloor \frac{n}{2} \rfloor-1} \gamma_{t} ' a(a^{2t+1} - a^{2n-(2t+1)})b.
\end{aligned}
\end{equation*}

for some $\mu_{s}, \nu_{s}, \delta_{t}, \gamma_{t}, \mu_{s}', \nu_{s}', \delta_{t}', \gamma_{t}' \in \FF $ ($1 \leq s \leq n-1$ and $- \lfloor \frac{n}{2} \rfloor \leq t \leq  \lfloor \frac{n}{2} \rfloor - 1$). 
Therefore, \begin{equation*}
\begin{aligned}
\beta & = \tilde{f}(ab) - af(b) 
\\ & = \sum_{s=1}^{n-1} \mu_{s} ' (a^{2s} - a^{-2s}) + \sum_{s=1}^{n-1} \nu_{s} ' a(a^{2s} - a^{-2s})b + \sum_{t=- \lfloor \frac{n}{2} \rfloor}^{\lfloor \frac{n}{2} \rfloor-1} \delta_{t} ' (a^{2t+1} - a^{2n-(2t+1)}) \\ &\quad  + \sum_{t=- \lfloor \frac{n}{2} \rfloor}^{\lfloor \frac{n}{2} \rfloor-1} \gamma_{t} ' a(a^{2t+1} - a^{2n-(2t+1)})b - \sum_{s=1}^{n-1} \mu_{s}a(a^{2s} - a^{-2s}) - \sum_{s=1}^{n-1} \nu_{s} a(a^{2s} - a^{-2s})b \\ &\quad - \sum_{t=- \lfloor \frac{n}{2} \rfloor}^{\lfloor \frac{n}{2} \rfloor-1} \delta_{t}a(a^{2t+1} - a^{2n-(2t+1)}) - \sum_{t=- \lfloor \frac{n}{2} \rfloor}^{\lfloor \frac{n}{2} \rfloor -1} \gamma_{t}a(a^{2t+1} - a^{2n-(2t+1)})b.
\end{aligned}
\end{equation*} Since $\beta \in \FF \langle a \rangle$, so $\nu_{s}' = \nu_{s}, \hspace{0.1cm} \forall \hspace{0.1cm} s \in \{1, 2, ..., n-1\}$ and $\gamma_{t}' = \gamma_{t}, \hspace{0.1cm} \forall \hspace{0.1cm} t \in \{-\lfloor \frac{n}{2} \rfloor, ..., \lfloor \frac{n}{2} \rfloor -1\}$. Therefore,
\begin{equation*}
\begin{aligned}
\beta & = \sum_{s=1}^{n-1} \mu_{s} ' (a^{2s} - a^{-2s}) + \sum_{t=- \lfloor \frac{n}{2} \rfloor}^{\lfloor \frac{n}{2} \rfloor -1} \delta_{t} ' (a^{2t+1} - a^{2n-(2t+1)}) - \sum_{s=1}^{n-1} \mu_{s}a(a^{2s} - a^{-2s}) \\ &\quad - \sum_{t=- \lfloor \frac{n}{2} \rfloor}^{\lfloor \frac{n}{2} \rfloor -1} \delta_{t}a(a^{2t+1} - a^{2n-(2t+1)}).
\end{aligned}
\end{equation*}
Therefore, $f(a) = \beta b$ where $\beta$ is as found above. Now put \begin{equation*}
\begin{aligned}\mathcal{B} & = \{d_{(\bar{a}, \bar{b})} \mid (\bar{a}, \bar{b}) \in \{(a^{2s} - a^{-2s})b,0), ~~ ((a^{2t+1} - a^{2n-(2t+1)})b,0), ~~ (a(a^{2s} - a^{-2s})b, \\ &\quad (a^{2s}-a^{-2s})), ~~ (a(a^{2t+1} - a^{2n-(2t+1)})b, (a^{2t+1} - a^{2n-(2t+1)})b), ~~ (0, (a^{2s} - a^{-2s})b), \\ &\quad (0, (a^{2t+1} - a^{2n-(2t+1)})b) \mid 1 \leq s \leq n-1, -\lfloor \frac{n}{2} \rfloor \leq t \leq \lfloor \frac{n}{2} \rfloor -1\}\}.\end{aligned}
\end{equation*} Then $\mathcal{B}$ is an $\FF $-basis of $\der_{\FF }(\FF (SD_{8n}))$. As $|\mathcal{B}| = 3(n-1) + 6 \lfloor \frac{n}{2} \rfloor = \begin{cases} 
3(2n-1) & \text{if $n$ is even}\\
6(n-1) & \text{if $n$ is odd} \end{cases}$, therefore, the dimension of $\der(\FF (DS_{8n}))$ over $\FF $ is $3(2n-1)$ if $n$ is even and $6(n-1)$ if $n$ is odd.\vspace{10pt}

\textbf{(ii)} $\ch(\FF ) = p$ with $\gcd(n,p)=p$: Then $2n \alpha a^{2n-1} = 0$. So (\ref{eq 5.3}) becomes $\tilde{f}(a^{2n}) \\ = \beta \left(\sum_{\substack{1 \leq i \leq 2n \\ \text{$i$ even}}} a^{2n+2i-1}b + \sum_{\substack{1 \leq i \leq 2n \\ \text{$i$ odd}}} a^{2i-1}b \right)$. Now proceeding as in (i), it can be shown that if $n$ is even, $\tilde{f}(a^{2n})=0$ if and only if $\beta \in \Delta'(\langle a^{2} \rangle)$, and if $n$ is odd, $\tilde{f}(a^{2n})=0$ if and only if $\beta = \sum_{i=1}^{4n} \lambda_{i}a^{i} \in \Delta'(\langle a^{2} \rangle) \cap \Delta'(\langle a^{4} \rangle)$ together with the conditions that $\sum_{j=1}^{n} \lambda_{4j-3} = 0$ and $\sum_{j=1}^{n} \lambda_{4j-1} = 0$.
So $f(a) = \alpha + \beta b$, where $\beta$ satisfies the above conditions.
As $\alpha b + \beta = \tilde{f}(ab)-af(b)$, so using the expressions for $f(b)$ and $f(ab)$, we will get that \\ $\alpha = \sum_{s=1}^{n-1} \nu_{s} ' a(a^{2s} - a^{-2s}) + \sum_{t=- \lfloor \frac{n}{2} \rfloor}^{\lfloor \frac{n}{2} \rfloor-1} \gamma_{t} ' a(a^{2t+1} - a^{2n-(2t+1)}) - \sum_{s=1}^{n-1} \nu_{s} a(a^{2s} - a^{-2s}) \\ ~~~~~~ - \sum_{t=- \lfloor \frac{n}{2} \rfloor}^{\lfloor \frac{n}{2} \rfloor -1} \gamma_{t}a(a^{2t+1} - a^{2n-(2t+1)})$ and

\noindent  $\beta = \sum_{s=1}^{n-1} \mu_{s} ' (a^{2s} - a^{-2s}) + \sum_{t=- \lfloor \frac{n}{2} \rfloor}^{\lfloor \frac{n}{2} \rfloor -1} \delta_{t} ' (a^{2t+1} - a^{2n-(2t+1)}) - \sum_{s=1}^{n-1} \mu_{s}a(a^{2s} - a^{-2s}) \\ ~~~~~~ - \sum_{t=- \lfloor \frac{n}{2} \rfloor}^{\lfloor \frac{n}{2} \rfloor -1} \delta_{t}a(a^{2t+1} - a^{2n-(2t+1)})$.

Therefore, $f(a) = \alpha + \beta b$, where $\alpha$ and $\beta$ are as determined above. Put \begin{equation*}
\begin{aligned}\mathcal{B}' & = \{d_{(\bar{a}, \bar{b})} \mid (\bar{a}, \bar{b}) \in \{(a(a^{2s} - a^{-2s}),0), (a(a^{2t+1} - a^{2n-(2t+1)}),0), (a(a^{2s} - a^{-2s}), \\ &\quad (a^{2s} - a^{-2s})b),  (a(a^{2t+1} - a^{2n-(2t+1)}), (a^{2t+1} - a^{2n-(2t+1)})b), ((a^{2s} - a^{-2s})b,0), \\ &\quad ((a^{2t+1} - a^{2n-(2t+1)})b,0),  (a(a^{2s} - a^{-2s})b, (a^{2s} - a^{-2s})), (a(a^{2t+1} - a^{2n-(2t+1)})b, \\ &\quad (a^{2t+1} - a^{2n-(2t+1)}))  \mid 1 \leq s \leq n-1, -\lfloor \frac{n}{2} \rfloor  \leq t \leq \lfloor \frac{n}{2} \rfloor -1\}\}.
\end{aligned}
\end{equation*} Then $\mathcal{B}'$ is an $\FF $-basis of $\der_{\FF }(\FF (SD_{8n}))$. Since $|\mathcal{B}'| = 4(n-1) + 8 \lfloor \frac{n}{2} \rfloor$, therefore, the dimension of $\der_{\FF }(\FF (SD_{8n}))$ over $\FF $ is $4(2n-1)$ if $n$ is even and $8(n-1)$ if $n$ is odd. Hence proved.
\end{proof}

\begin{corollary}\th\label{corollary 5.4}
Let $\FF $ be an algebraic extension of a prime field such that $\ch(\FF )$ is either $0$ or an odd rational prime $p$ relatively prime to $n$. Then the dimension of $\der(\FF (SD_{8n}))$ over $\FF $ is $3(2n-1)$ if $n$ is even and $6(n-1)$ if $n$ is odd, and a basis $\mathcal{B}$ as given in part (i) of \th\ref{theorem 5.3}.
\end{corollary}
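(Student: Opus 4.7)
The plan is to deduce this as a direct consequence of \th\ref{theorem 2.12} and \th\ref{theorem 5.3}(i). The corollary is a statement about the full derivation algebra $\der(\FF(SD_{8n}))$, while \th\ref{theorem 5.3}(i) describes the $\FF$-derivation subalgebra $\der_{\FF}(\FF(SD_{8n}))$, so the only thing to verify is that these two coincide under the hypotheses.

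First I would invoke \th\ref{theorem 2.12}: since $\FF$ is, by assumption, an algebraic extension of a prime field, any derivation $d$ of the $\FF$-algebra $\FF(SD_{8n})$ automatically satisfies $d(\lambda)=0$ for all $\lambda \in \FF$ and is $\FF$-linear. Equivalently, $d \in \der_{\FF}(\FF(SD_{8n}))$. Conversely, every $\FF$-derivation is a derivation. Hence $\der(\FF(SD_{8n})) = \der_{\FF}(\FF(SD_{8n}))$ as $\FF$-vector spaces.

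Once this identification is in place, the hypothesis on $\ch(\FF)$ (namely, $0$ or an odd prime $p$ with $\gcd(n,p)=1$) puts us squarely in case (i) of \th\ref{theorem 5.3}. Reading off the dimension and the explicit basis $\mathcal{B}$ from that part of the theorem gives exactly the dimension $3(2n-1)$ when $n$ is even, the dimension $6(n-1)$ when $n$ is odd, and the claimed basis. There is no real obstacle here; the only subtlety worth flagging is that \th\ref{theorem 2.12} is what rules out non-$\FF$-linear derivations, which is the bridge that makes \th\ref{theorem 5.3}(i) applicable to all of $\der(\FF(SD_{8n}))$ and not merely to $\der_{\FF}(\FF(SD_{8n}))$. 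The proof therefore amounts to a one-line citation of these two results, analogous to the proofs of \th\ref{corollary 3.4} and \th\ref{corollary 4.4}.
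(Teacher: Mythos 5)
Your proposal is correct and matches the paper's own argument exactly: the paper proves \th\ref{corollary 5.4} by citing \th\ref{theorem 2.12} to identify $\der(\FF(SD_{8n}))$ with $\der_{\FF}(\FF(SD_{8n}))$ and then reading off the dimension and basis from \th\ref{theorem 5.3}(i). Your additional remark that \th\ref{theorem 2.12} is the bridge ruling out non-$\FF$-linear derivations is precisely the content the paper leaves implicit.
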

\begin{proof}
Follows from \th\ref{theorem 2.12} and \th\ref{theorem 5.3} (i).
\end{proof}

\begin{theorem}\th\label{theorem 5.5}
Let $\FF $ be a field. Then the dimension of $\der_{\inn}(\FF (SD_{8n}))$ over $\FF $ is $3 (2n-1)$ if $n$ is even and $6(n-1)$ if $n$ is odd.

\begin{enumerate}
\item[(i)] When $n$ is even, a basis for $\der_{\inn}(\FF (SD_{8n}))$ is 
\begin{equation*}
\begin{aligned}
\mathcal{B}_{e} & = \{d_{g} \mid g \in \{a^{2k} \mid 1 \leq k \leq n-1\} \cup \{a^{2k+1} \mid - \frac{n}{2} \leq k \leq \frac{n}{2}-1\} \\ &\quad \cup \{a^{2k}b, a^{2k-1}b \mid 1 \leq k \leq 2n-1\}\}.
\end{aligned}
\end{equation*}

\item[(ii)] When $n$ is odd, a basis for $\der_{\inn}(\FF (SD_{8n}))$ is 
\begin{equation*}
\begin{aligned}
\mathcal{B}_{o} & = \{d_{g} \mid g \in \{a^{2k} \mid 1 \leq k \leq n-1\} \cup \{a^{2k+1} \mid - \frac{n-1}{2} \leq k \leq \frac{n-1}{2}-1\} \\ &\quad \cup \{a^{4k}b, a^{4k+1}b, a^{4k+2}b, a^{4k+3}b \mid 1 \leq k \leq n-1\}\}.
\end{aligned}
\end{equation*}
\end{enumerate}
\end{theorem}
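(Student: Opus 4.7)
The plan is to derive Theorem 5.5 as a direct consequence of Theorem 2.15 combined with the conjugacy-class description in Lemma 5.1, so essentially no new computation is required beyond bookkeeping. First I would note that $|SD_{8n}|=8n$, so by Theorem 2.15 the dimension of $\der_{\inn}(\FF(SD_{8n}))$ equals $8n-r$, where $r$ is the number of conjugacy classes of $SD_{8n}$. Substituting the counts from Lemma 5.1 gives $r=2n+3$ in the even case and $r=2n+6$ in the odd case, which yields $8n-(2n+3)=3(2n-1)$ and $8n-(2n+6)=6(n-1)$, matching the stated dimensions.

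For the explicit bases, I would invoke the second part of Theorem 2.15, which says: after choosing one representative $x_i$ from each conjugacy class $C_i$, the family $\{d_g : g\in\bigcup_{|C_i|\geq 2}(C_i\setminus\{x_i\})\}$ is an $\FF$-basis of $\der_{\inn}(\FF(SD_{8n}))$. For the even case I would take the singleton classes $\{1\}$ and $\{a^{2n}\}$ to contribute nothing, then for each pair $\{a^{2k},a^{-2k}\}$ ($1\leq k\leq n-1$) discard $a^{-2k}$ and keep $a^{2k}$; for each pair $\{a^{2k+1},a^{2n-(2k+1)}\}$ ($-n/2\leq k\leq n/2-1$) discard $a^{2n-(2k+1)}$ and keep $a^{2k+1}$; and for each of the two large classes $b^{SD_{8n}}$ and $(ab)^{SD_{8n}}$ (of size $2n$) discard one element, leaving $2n-1$ elements indexed as $\{a^{2k}b,a^{2k-1}b : 1\leq k\leq 2n-1\}$. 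The union of these is precisely $\mathcal{B}_e$. The odd case is handled identically, the only difference being that now there are four singleton classes $\{1\},\{a^n\},\{a^{2n}\},\{a^{3n}\}$ (contributing nothing) and the two big classes of size $2n$ are refined into four classes of size $n$, whose surviving elements give exactly $\{a^{4k}b,a^{4k+1}b,a^{4k+2}b,a^{4k+3}b : 1\leq k\leq n-1\}$, producing $\mathcal{B}_o$.

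I expect no significant obstacle, since Theorem 2.15 already does the conceptual heavy lifting; the only task is to verify, via a direct count, that the listed families contain exactly one element per non-identity orbit representative. The one small bookkeeping check I would perform carefully is the cardinality match: for the even case $(n-1)+n+(2n-1)+(2n-1)=6n-3=3(2n-1)$, and for the odd case $(n-1)+(n-1)+4(n-1)=6(n-1)$, which confirm the dimensions and thus, together with the linear independence guaranteed by Theorem 2.15, the basis property.
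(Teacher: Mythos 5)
Your proposal is correct and follows exactly the paper's route: the paper's proof of this theorem is simply ``Follows from Theorem 2.15 and Lemma 5.1,'' and your dimension count $8n-r$ and the representative-discarding construction of $\mathcal{B}_e$ and $\mathcal{B}_o$ are precisely the intended bookkeeping. The cardinality checks you perform are accurate, so nothing is missing.
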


\begin{proof}
Follows from \th\ref{theorem 2.15} and \th\ref{lemma 5.1}.
\end{proof}

\begin{corollary}\th\label{corollary 5.6}
Let $\FF $ be an algebraic extension of a prime field and $p$ be an odd rational prime.
\begin{enumerate}
\item[(i)] If $\ch(\FF )$ is either $0$ or $p$ with $\gcd(n,p)=1$, then all derivations of $\FF (SD_{8n})$ are inner, that is,  $\der(\FF (SD_{8n})) = \der_{\inn}(\FF (SD_{8n}))$. In other words, $\FF (SD_{8n})$ has no non-zero outer derivations.

\item[(ii)] If $\ch(\FF ) = p$ with $\gcd(n,p) \neq 1$, then $\der_{\inn}(\FF (SD_{8n})) \subsetneq \der(\FF (SD_{8n}))$. In other words, $\FF (SD_{8n})$ has non-zero outer derivations.
\end{enumerate}
\end{corollary}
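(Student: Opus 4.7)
The plan is to mimic the strategy already used for Corollaries \th\ref{corollary 3.6} and \th\ref{corollary 4.6}, namely to deduce everything from a dimension comparison between $\der_{\inn}(\FF(SD_{8n}))$ and $\der(\FF(SD_{8n}))$, invoking only the results already established in Section \ref{section 5}. The key observation is that since $\FF$ is an algebraic extension of a prime field, \th\ref{theorem 2.12} guarantees $\der(\FF(SD_{8n})) = \der_{\FF}(\FF(SD_{8n}))$, so the dimensions of both the ambient derivation algebra and its subspace of inner derivations are known explicitly.

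For part (i), I would assemble three facts: first, \th\ref{corollary 5.4} gives $\dim_{\FF}\der(\FF(SD_{8n})) = 3(2n-1)$ if $n$ is even and $6(n-1)$ if $n$ is odd; second, \th\ref{theorem 5.5} gives exactly the same numbers for $\dim_{\FF}\der_{\inn}(\FF(SD_{8n}))$; third, the trivial inclusion $\der_{\inn}(\FF(SD_{8n})) \subseteq \der(\FF(SD_{8n}))$ combined with finite equality of dimensions forces equality of the two spaces, which is the desired conclusion that every derivation is inner.

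For part (ii), since $\ch(\FF) = p$ with $p \mid n$, \th\ref{theorem 2.12} together with \th\ref{theorem 5.3}(ii) yields $\dim_{\FF}\der(\FF(SD_{8n})) = 4(2n-1)$ for $n$ even and $8(n-1)$ for $n$ odd, while \th\ref{theorem 5.5} still produces $3(2n-1)$ and $6(n-1)$ respectively for $\dim_{\FF}\der_{\inn}(\FF(SD_{8n}))$ (the inner derivation count depends only on $|SD_{8n}|$ and the number of conjugacy classes via \th\ref{theorem 2.15}, and in particular is insensitive to the characteristic). Comparing, the inner dimension is strictly smaller than the full derivation dimension in both parities, so the inclusion $\der_{\inn}(\FF(SD_{8n})) \subsetneq \der(\FF(SD_{8n}))$ is strict and outer derivations exist.

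There is essentially no obstacle here; the corollary is a bookkeeping consequence of the preceding work. The only point requiring a moment of care is confirming that the inner dimension formula from \th\ref{theorem 5.5} is indeed characteristic-independent, which is immediate from \th\ref{theorem 2.15} since that theorem is stated over an arbitrary field and depends only on $|G|$ and the number of conjugacy classes of $G$. After that observation, parts (i) and (ii) both reduce to writing down the two numerical identities (respectively the inequalities) of dimensions.
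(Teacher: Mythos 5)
Your argument is exactly the paper's: the paper proves this corollary by citing \th\ref{theorem 5.3}(ii), \th\ref{corollary 5.4} and \th\ref{theorem 5.5}, which is precisely the dimension comparison you spell out (equal dimensions plus inclusion forces equality in part (i); strictly larger full dimension forces strict inclusion in part (ii)). The proposal is correct and takes essentially the same route, just written out in more detail.
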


\begin{proof}
Follows from \th\ref{theorem 5.3} (ii), \th\ref{corollary 5.4} and \th\ref{theorem 5.5}.
\end{proof}\vspace{12pt}

\noindent \textbf{Acknowledgements}\vspace{8pt}

\noindent The first author receives a monthly institute fellowship from her institute Indian Institute of Technology Delhi for carrying out her doctoral research. The second author is the ConsenSys Blockchain chair professor. He thanks ConsenSys AG for that privilege.\vspace{10pt}

\noindent \textbf{Declaration of Interest Statement}\vspace{8pt}

\noindent The authors report there are no competing interests to declare.

\bibliographystyle{plain}
\end{document}